\documentclass[13pt,a4paper,reqno]{amsart}
\usepackage{amssymb}
\usepackage{amsmath,amsthm}
\usepackage{mathrsfs}
\usepackage{bbm}
\usepackage{ulem}

\def\bc{\begin{center}}
\def\ec{\end{center}}
\def\be{\begin{equation}}
\def\ee{\end{equation}}

\def\N{\mathbb N}

\newtheorem{lem}{Lemma}[section]

\newtheorem{thm}[lem]{Theorem}

\theoremstyle{remark}

\numberwithin{equation}{section}

\begin{document}
\title[Multifractal analysis of Saint-Petersburg potential]
{Multifractal analysis of the Birkhoff sums of Saint-Petersburg potential}

\author{Dong Han KIM}
\address{Dong Han KIM\\ Department of Mathematics Education, Dongguk University-Seoul,30 Pildong-ro 1-gil, Jung-gu, Seoul, 04620 Korea}
\email{kim2010@dongguk.edu}

\author{Lingmin LIAO }
\address{Lingmin LIAO\\  LAMA, Universit\'e Paris-Est Cr\'eteil Val de Marne, 61, avenue du G\'en\'eral de Gaulle
94010 Cr\'eteil Cedex, France}
\email{lingmin.liao@u-pec.fr}

\author{Micha\l\ RAMS}
\address{Micha\l\ Rams\\Institute of Mathematics\\ Polish Academy of Sciences\\ ul.
\'Sniadeckich 8, 00-656 Warszawa\\ Poland }
\email{rams@impan.pl}

\author{Bao-Wei WANG}
\address{Bao-Wei WANG\\ School of Mathematics and Statistics, Huazhong University of Science and Technology, 430074 Wuhan, China}
\email{bwei\_wang@hust.edu.cn }

\keywords{Saint-Petersburg potential, Hausdorff dimension, multifractal analysis.}

\subjclass[2010] {Primary 37E05 Secondary 28A80, 37D20.}

\thanks{D.K. was supported by the NRF of Korea (NRF-2015R1A2A2A01007090, NRF-2017K1A3A1A21013650).
M.R. was  supported  by  National  Science  Centre  grant 2014/13/B/ST1/01033 (Poland).
B.W. was partially supported by NSFC N0. 11471030.}

\maketitle

\begin{abstract}
Let $((0,1], T)$ be the doubling map in the unit interval and $\varphi$ be the Saint-Petersburg potential, defined by
$\varphi(x)=2^n$ if $x\in (2^{-n-1}, 2^{-n}]$ for all $n\geq 0$.
We consider asymptotic properties of the Birkhoff sum $S_n(x)=\varphi(x)+\cdots+\varphi(T^{n-1}(x))$.
With respect to the Lebesgue measure, the Saint-Petersburg potential is not integrable and it is known that $\frac{1}{n\log n}S_n(x)$ converges to $\frac{1}{\log 2}$ in probability.
We determine the Hausdorff dimension of the level set $\{x: \lim_{n\to\infty}S_n(x)/n=\alpha\} \ (\alpha>0)$, as well as that of the set $\{x: \lim_{n\to\infty}S_n(x)/\Psi(n)=\alpha\} \ (\alpha>0)$, when $\Psi(n)=n\log n$, $n^a $ or $2^{n^\gamma}$ for $a>1$, $\gamma>0$.
The fast increasing Birkhoff sum of the potential function $x\mapsto 1/x$ is also studied.
\end{abstract}

\section{Introduction}\label{sec:intro}

Let $T$ be the doubling map on the unit interval $(0,1]$ defined by
\begin{equation*}
Tx= 2x-\lceil 2x\rceil +1,
\end{equation*}
where $\lceil x \rceil$ is the smallest integer larger than or equal to $x$.
 Let $\epsilon_1$ be the function defined by $\epsilon_1(x)=\lceil 2x \rceil-1$ and $\epsilon_n(x):=\epsilon_1(T^{n-1}x)$ for $n\ge 2$.
 Then each real number $x\in (0,1]$ can be expanded into {\it an infinite series} as \begin{equation}\label{f0}
x=\frac{\epsilon_1(x)}{2}+\cdots+\frac{\epsilon_n(x)}{2^n}+\cdots.
\end{equation} We call (\ref{f0}) the binary expansion of $x$ and also write it 
as $$x=[\epsilon_1(x)\epsilon_2(x)\dots].$$
The Saint-Petersburg potential is a function $\varphi: (0,1]\to \mathbb{R}$ defined as
\[
\varphi(x)=2^n \ \text{if} \ x\in (2^{-n-1}, 2^{-n}],  \ \forall n\geq 0.\]
We remark that the definition of $\varphi$ is equivalent to
$$
\varphi(x)=2^{n}\ \ {\text{where $n\geq 0$ is the  smallest integer such that}}\ \epsilon_{n+1}(x)=1.
$$
and is also equivalent to $$
\varphi(x)=2^{n}\ \  {\text{if the binary expansion of}}\ x \ {\text{begins with}}\ 0^{n}1,
$$
where $0^{n} (n\geq 0)$ means a block with $n$ consecutive zeros.

The name of Saint-Petersburg potential is motivated by the famous Saint-Petersburg game in probability theory.
The Saint-Petersburg potential is of infinite expectation with respect to the Lebesgue measure.
Furthermore, it increases exponentially fast near to the point $0$.

In this paper, we are concerned with the following Birkhoff sums of the Saint-Petersburg potential:
$$
\forall n \geq 1, \quad S_n(x):=\varphi(x)+\varphi(T(x)) + \cdots+\varphi(T^{n-1}(x)), \qquad  x\in (0,1].
$$

Let $$
I=\{x\in (0,1]: \epsilon_1(x)=1\}.
$$
Define the hitting time of $x\in (0,1]$ to $I$ as $$
n(x):=\inf\{n\ge 0: T^{n}x\in I\}.
$$
Then
$$
n(x)=n \quad \text{if} \ x\in\left({1\over 2^{n+1}}, {1\over 2^{n}}\right], \ \text{for all } n\geq 0.
$$
Using $n(x)$, we define a new dynamical system
$\widehat{T}: (0,1]\to (0,1]$ by
$$\widehat T (x) = T^{n(x)+1}(x)=2^{n+1}\Big(x-\frac{1}{2^{n+1}}\Big) \ \text{if} \ x\in \left({1\over 2^{n+1}}, {1\over 2^{n}}\right], \ \text{for all } n\geq 0,$$
called the acceleration of $T$, in order that $\varphi$ and $\varphi \circ \widehat T$ are independent.
Let
$$
\widehat S_n(x):=\varphi(x)+\varphi(\widehat T (x))+\cdots+\varphi(\widehat T^{n-1}(x)), \qquad  x\in (0,1].
$$
The convergence in probability of $\widehat S_n(x)$ is well known (e.g.  \cite[p.253]{Fe68}) which states that
for any $\epsilon>0$, the Lebesgue measure $\lambda$ of
$$
\left\{x\in (0,1]\ :  \ \Big| \frac{\widehat S_n(x)}{n\log n}-\frac{1}{\log 2} \Big| \ge \epsilon \right\}
$$
tends to $0$ as $n\to \infty$.

Let $\{\Psi_n\}_{n\ge 1}$ be an increasing sequence such that $\Psi_n\to \infty$ as $n\to\infty$.
Then it was shown in \cite{Fe46} that Lebesgue almost surely either
$$
\lim_{n\to \infty}\frac{\widehat S_n(x)}{\Psi_n}=0 \quad {\text{or}} \quad  \limsup_{n\to \infty}\frac{\widehat S_n(x)}{\Psi_n}=\infty,
$$
according as $$\sum_{n\ge 1} {\lambda}( \{ x \in (0,1]: \varphi(x) \ge \Psi_n \} )<\infty \quad {\text{or}} \quad =\infty.$$

Let $n_1=n_1(x)=n(x)+1$ and $n_k=n_k(x)=n_1(\widehat T^{k-1}x)=n(\widehat T^{k-1}x)$ for $k\geq 2$. It is direct to see that
\[
\forall \ell\geq 1, \quad S_{n_1+\cdots+ n_{\ell}}(x)=2\widehat S_{\ell}(x)-\ell.
\]
Moreover, the ergodicity of $T$ (of $\widehat T$) implies Lebesgue almost surely
\[
\lim_{\ell\to\infty}{n_1(x)+\cdots +n_{\ell}(x) \over \ell} =\int_0^1 (n(x)+1)d\lambda(x)=2.
\]
Combining these two facts together, we obtain the same convergence results as above if we replace $\widehat S_n$ by $S_n$.
In particular, the average $S_n(x)/(n\log n)$ converges to $1/\log 2$ in probability, and
 almost surely (with respect to the Lebesgue measure) either
$$
\lim_{n\to \infty}\frac{ S_n(x)}{\Psi_n}=0 \quad {\text{or}} \quad  \limsup_{n\to \infty}\frac{ S_n(x)}{\Psi_n}=\infty,
$$
according as $$\sum_{n\ge 1} {\lambda}( \{ x \in (0,1]: \varphi(x) \ge \Psi_n \} )<\infty \quad {\text{or}} \quad =\infty,$$
where $\{\Psi_n\}_{n\ge 1}$ is an increasing sequence such that $\Psi_n\to \infty$ as $n\to\infty$. Recall that $\varphi$ has infinite expectation with respect to the Lebesgue measure.
We thus have $S_n(x)/n$ converges to infinity for Lebesgue almost all points.

\medskip

In this article, we want to further study the asymptotic behavior of of the  Birkhoff sum $S_n(x)$ of the Saint-Petersburg potential. We give a complete multifractal analysis of $S_n(x)$.

First, for any $\alpha\geq 1$, we consider the level set $$
E({\alpha})=\left\{x\in (0,1]: \lim_{n\to \infty}\frac{1}{n}S_n(x)=\alpha\right\}.
$$
For $t\in \mathbb{R}$ and $q>0$, define
\begin{eqnarray*}
  P(t,q):=\log \sum_{j=1}^{\infty} 2^{-tj-q(2^j-1)}.
\end{eqnarray*}
Then $P$ is a real-analytic function. Furthermore, for each $q>0$, there is a unique $t(q)>0$ such that $P(t(q), q)=0$. This function $q\mapsto t(q)$ is real-analytic, strictly decreasing and convex.

Denote by $\dim_H$ the Hausdorff dimension.
The function $\alpha\mapsto \dim_HE(\alpha)$, called the Birkhoff spectrum of the Saint-Petersburg potential $\varphi$, is  proved to be the Legendre transformation of the function $q \mapsto t(q)$.
\begin{thm}\label{main-1}
For any $\alpha \geq 1$ we have
\[
\dim_{H}E(\alpha)= \inf_{q>0} \{t(q)+q\alpha\}.
\]
Consequently, $\dim_HE(1)=0$ and the function $\alpha \mapsto \dim_{H}E(\alpha)$ is real-analytic,  strictly increasing, concave, and has limit $1$ as $\alpha \to \infty$.
\end{thm}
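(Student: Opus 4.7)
The plan is to recode $x\in(0,1]$ through its block sequence $(n_k(x))_{k\geq 1}$ defined before the theorem: a block $0^{n_k-1}1$ of the binary expansion contributes exactly $2^{n_k}-1$ to the Birkhoff sum of $\varphi$, so with $N_\ell=n_1+\cdots+n_\ell$ one has $S_{N_\ell}(x)=\sum_{k=1}^\ell(2^{n_k(x)}-1)$. A routine interpolation between the indices $N_\ell$ and $N_{\ell+1}$ shows that $x\in E(\alpha)$ if and only if
\[
\lim_{\ell\to\infty}\frac{\sum_{k=1}^\ell(2^{n_k(x)}-1)}{\sum_{k=1}^\ell n_k(x)}=\alpha.
\]
Thus the problem reduces to a purely symbolic one on the full shift over $\mathbb{N}$, with cylinders $[a_1,\ldots,a_\ell]=\{x:n_k(x)=a_k\text{ for }1\leq k\leq\ell\}$ of diameter $2^{-(a_1+\cdots+a_\ell)}$.

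For the lower bound, for each $q>0$ I would introduce the Bernoulli product measure $\mu_q$ in which the $n_k$ are i.i.d.\ with weights $p_j=2^{-t(q)j-q(2^j-1)}$ (summing to $1$ by the definition of $t(q)$). The means $\bar n(q)=\sum_j j\,p_j$ and $\bar\varphi(q)=\sum_j(2^j-1)p_j$ are finite thanks to the super-exponential factor $2^{-q(2^j-1)}$. Applying the strong law of large numbers to the independent variables $n_k$, $2^{n_k}-1$ and $-\log p_{n_k}$, at $\mu_q$-almost every $x$,
\[
\tfrac{1}{\ell}\sum_{k=1}^\ell n_k\to\bar n(q),\quad \tfrac{1}{\ell}\sum_{k=1}^\ell(2^{n_k}-1)\to\bar\varphi(q),\quad \frac{-\log\mu_q([n_1,\ldots,n_\ell])}{N_\ell\log 2}\to t(q)+q\alpha(q),
\]
where $\alpha(q):=\bar\varphi(q)/\bar n(q)$. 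The first two limits place $\mu_q$-a.e.\ $x$ inside $E(\alpha(q))$; the third, combined with the mass distribution principle, yields $\dim_H E(\alpha(q))\geq t(q)+q\alpha(q)$. A short analysis shows that $q\mapsto\alpha(q)$ is a real-analytic bijection from $(0,\infty)$ onto $(1,\infty)$: as $q\to\infty$ the mass concentrates at $j=1$, so $\alpha(q)\to 1$, while as $q\to 0^+$ one has $p_j\to 2^{-j}$ and $\bar\varphi(q)\to\infty$, so $\alpha(q)\to\infty$.

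The upper bound comes from a weighted covering. Given $q>0$ and $\epsilon>0$, every $x\in E(\alpha)$ belongs, for all $N$ sufficiently large, to a unique cylinder $C(N,x)=[a_1,\ldots,a_\ell]$ with $N_{\ell-1}<N\leq N_\ell$ and $\sum_{k=1}^\ell(2^{a_k}-1)\leq(\alpha+\epsilon)N_\ell$. Setting $s=t(q)+q\alpha+q\epsilon$,
\[
(\mathrm{diam}\,C)^{s}=2^{-N_\ell s}\leq 2^{-t(q)N_\ell-q\sum(2^{a_k}-1)}=\mu_q(C).
\]
These cylinders are pairwise disjoint as $x$ varies, so $\mathcal{H}^{s}_{2^{-N}}(E(\alpha))\leq\sum_C\mu_q(C)\leq 1$ and hence $\dim_H E(\alpha)\leq s$. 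Letting $\epsilon\to 0$ and optimising over $q$ gives $\dim_H E(\alpha)\leq\inf_{q>0}\{t(q)+q\alpha\}$, matching the lower bound.

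For the boundary case $\alpha=1$, the identity $\sum(2^{n_k}-1)/\sum n_k=1$ forces $n_k=1$ for almost every $k$, so $E(1)$ is countable and has dimension $0$; correspondingly, $\inf_{q>0}\{t(q)+q\}=\lim_{q\to\infty}(t(q)+q)=0$, since $2^{-t(q)-q}\to 1$. The real-analyticity, strict monotonicity, concavity, and the limit $\dim_H E(\alpha)\to 1$ as $\alpha\to\infty$ follow from the corresponding properties of $t(q)$ via the standard theory of Legendre transforms (the limit $1$ coming from $t(0^+)=1$, i.e.\ $\sum_j 2^{-j}=1$). I expect the main difficulty to be the upper bound on the infinite alphabet $\mathbb{N}$: controlling the total weight of the covering cylinders requires the absolute summability $\sum_j p_j=1$, which is exactly the defining equation $P(t(q),q)=0$ of $t(q)$, so the thermodynamic choice of $t(q)$ is doing all the work.
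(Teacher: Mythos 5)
Your approach is correct in its main lines and takes a genuinely different, more elementary route than the paper. The paper passes through the transference lemma (as you do) and then invokes the Ruelle--Perron--Frobenius / Gibbs-measure machinery of Hanus--Mauldin--Urba\'nski for conformal infinite iterated function systems, using the Gibbs property, the derivative formulas $\partial_q P=-\int(\log 2)\phi\,d\mu$, $\partial_t P=-\int\log|\widehat T'|\,d\mu$, and the Billingsley lemma. You observe that the potential depends only on the first block length $n_1(x)$, so the Gibbs measure for $\psi_{t(q),q}$ is exactly the Bernoulli product measure with weights $p_j=2^{-t(q)j-q(2^j-1)}$; you then replace the ergodic theorem by the strong law of large numbers and replace Billingsley's lemma by a direct covering argument. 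This buys transparency and self-containedness (no appeal to the HMU framework), and your upper bound via the pairwise-disjoint cylinder family $\{C(N,x)\}$ is a neat, standard weighted-covering argument.

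A few points you should tighten. First, your upper bound asserts that ``every $x\in E(\alpha)$ belongs, for all $N$ sufficiently large, to a unique cylinder $C(N,x)$'' with $\sum_k(2^{a_k}-1)\le(\alpha+\epsilon)N_\ell$; the threshold on $N$ depends on $x$, so to get a bona fide $\delta$-cover you need the usual exhaustion $E(\alpha)=\bigcup_M E_M(\alpha)$, where $E_M(\alpha)$ enforces the constraint for all $\ell\ge M$, and then let $M\to\infty$ at the end. Second, for the lower bound the SLLN gives you the local dimension along $\widehat T$-cylinders; to apply the mass distribution principle you must also check that $\log|D_\ell(x)|/\log|D_{\ell+1}(x)|\to 1$ $\mu_q$-a.e.\ (equivalently $n_{\ell+1}=o(N_\ell)$), which follows from Borel--Cantelli since $\sum_\ell\mu_q(n_\ell>C\log\ell)<\infty$; the paper handles this in its Claim (B). Third, your statement that ``$E(1)$ is countable'' is false: points with $n_k=1$ except $n_k=2$ on any density-zero set of indices lie in $E(1)$, and there are uncountably many such. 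The correct observation (as in the paper) is that $E(1)$ is contained in the set of $x$ whose digit-1 frequency equals $1$, which has Hausdorff dimension $0$; alternatively, your own upper bound already gives $\dim_H E(1)\le\inf_{q>0}\{t(q)+q\}=0$, so the countability remark is both wrong and unnecessary. With those repairs the argument is sound.
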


The Birkhoff spectrum of a continuous potential was obtained for full shifts \cite{Oli98}, for topologically mixing subshifts of finite type \cite{FFW01}, and for repellers of a topologically mixing $C^{1+\epsilon}$ expanding map \cite{BS01}.
A continuous potential in a compact space is bounded, hence these classical results are all for bounded potentials.
Our Theorem~\ref{main-1} gives a Birkhoff spectrum for an unbounded function with a singular point.
To prove Theorem~\ref{main-1}, we will transfer our question to a Birkhoff spectrum problem of an interval map with infinitely many branches and we will apply the techniques  developed in \cite{FLWW} for continued fraction dynamical system and in \cite{FJLR} for general expanding interval maps with infinitely many branches.

\medskip
We also study the Birkhoff sums $S_n(x)$ of fast increasing rates. Let $\Psi: \mathbb{N} \rightarrow \mathbb{N}$ be an increasing function.
For $\beta \in [0,\infty]$, consider the level set $$
E_\Psi({\beta}):=\left\{x\in (0,1]: \lim_{n\to \infty}\frac{1}{\Psi(n)}S_n(x)=\beta\right\}.
$$
\begin{thm}\label{t2}  
If $\Psi(n)$ is
one of the following
\[
\Psi(n)=n\log n, \ \Psi(n)=n^a \ (a>1), \ \Psi(n)=2^{n^{\gamma}} \ (0<\gamma<1/2),
\]
then for any $\beta \in [0,\infty]$, $\dim_HE_\Psi({\beta})=1$.

If $\Psi(n)=2^{n^{\gamma}}$ with $1/2\le \gamma<1$, then for any $\beta \in (0, \infty)$,
the set $E_\Psi({\beta})$ is empty, and $\dim_HE_\Psi({\beta})=1$ for $\beta=0, +\infty$.

If $\Psi(n)=2^{n^{\gamma}}$ with $\gamma\ge 1$, then for any $\beta  \in (0,\infty]$, the set $E_\Psi({\beta})$ is empty,  and $\dim_HE_\Psi({\beta})=1$ for $\beta=0$.
\end{thm}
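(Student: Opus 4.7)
The plan is to analyze $S_n$ through the block structure of the binary expansion. Let $p_1<p_2<\ldots$ be the positions where $\epsilon_{p_j}(x)=1$, set $L_j=p_{j+1}-p_j$, and let $r_k(x)=\min\{i\ge 0:\epsilon_{k+i}(x)=1\}$, so that $\varphi(T^{k-1}x)=2^{r_k}$. The explicit formulas
\[
S_{p_{k+1}-1}(x)=\sum_{j=1}^{k}(2^{L_j}-1),\qquad S_{p_k+i}(x)-S_{p_k-1}(x)=1+2^{L_k}-2^{L_k-i}\quad(0\le i<L_k)
\]
describe $S_m$ at block endpoints and in the interior. All assertions in the theorem will be read off these formulas, with dimension-one statements handled by explicit constructions and the emptiness statements handled by structural obstructions.

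For the dimension-one constructions with $\beta=0$ (valid in every subcase), I will use $F_M=\{x:L_j(x)\le M \text{ for all } j\}$: on $F_M$ one has $S_n\le n\cdot 2^M=o(\Psi(n))$ for every listed $\Psi$, and $F_M$ is a subshift of finite type whose dimension tends to $1$ as $M\to\infty$. For $\beta=+\infty$ (in the cases where the set is declared to have full dimension), I will plant long runs of length $L_{k_j}$ at positions $k_j$ of zero density, tuned so that $2^{L_{k_j}}/\Psi(p_{k_j+1}-1)\to\infty$, while keeping the positive-density complementary set of digits free. For $\beta\in(0,\infty)$ with $\Psi\in\{n\log n,\, n^a,\, 2^{n^\gamma}(0<\gamma<1/2)\}$, I will construct a template of block lengths matching $\sum_{j\le k}(2^{L_j}-1)\sim\beta\Psi(p_{k+1}-1)$; in the $2^{n^\gamma}$ case this leads to $L_k\approx p_k^\gamma-(1-2\gamma)\log_2 p_k+c(\beta)$. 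The intra-block oscillation is then of order $\beta\gamma p_k^{2\gamma-1}\ln 2$, which vanishes precisely when $\gamma<1/2$, so the limit is achieved uniformly; dimension one is obtained by allowing the $L_j$ to vary in wide admissible bands.

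For the emptiness statements I will argue by structural obstruction. When $\gamma\ge 1$ and $\beta\in(0,\infty]$, the bound $S_n\le n\cdot 2^{R_n}$ with $R_n=\max_{k\le n}r_k$ forces $R_n\gtrsim n^\gamma$; but at the subsequence $n=p_{j+1}-1$ one has $R_n\le\max_{i\le j}L_i$, and because only blocks with $p_{i^\ast}$ in a bounded window can have length comparable to $p_{j+1}$, a fixed-$x$ argument yields $\liminf_n(n^\gamma-R_n)=+\infty$ along this subsequence, killing any positive limit. When $\gamma\in[1/2,1)$ and $\beta\in(0,\infty)$, I will assume convergence $S_n/\Psi(n)\to\beta$ and use the intra-block formula to write the ratio in block $k$ as $[\beta+A_k(1-2^{-i})]\cdot 2^{-\gamma i p_k^{\gamma-1}}$ with $A_k=\beta(2^{\gamma L_k p_k^{\gamma-1}}-1)$ forced by the endpoint condition. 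For $\gamma\ge 1/2$ the critical parameter $\gamma L_k p_k^{\gamma-1}\asymp\gamma p_k^{2\gamma-1}$ does not tend to zero, so $A_k$ stays bounded away from zero; the ratio therefore oscillates with non-vanishing amplitude inside every long enough block, contradicting convergence.

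The most technically demanding piece is the dimension-one lower bound for $\beta\in(0,\infty)$ in the slow-growth regime. Matching the dominant $2^{L_k}$ term against the polynomial-logarithmic prefactor of the partial sum, while simultaneously providing enough flexibility in the $L_j$ to obtain a set of Hausdorff dimension one, is delicate. I plan to realise the lower bound via a Bernoulli-like measure on block-length sequences that follows the template on average but admits independent perturbations at each scale, and then apply the Mass Distribution Principle together with a local dimension computation in the spirit of the infinite-branched expanding-map arguments of \cite{FLWW} and \cite{FJLR} invoked for Theorem~\ref{main-1}.
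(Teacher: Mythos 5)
Your block-level decomposition of $S_n$ and the treatment of $\beta=0$ via $F_M$ are sound, but the central lower bound --- full Hausdorff dimension of $E_\Psi(\beta)$ for $\beta\in(0,\infty)$ and slowly growing $\Psi$ --- rests on a construction that cannot work. The ``smooth template'' you derive forces every block length $L_k$ to diverge, roughly as $p_k^\gamma$. Any set of reals whose $k$-th block length tracks a prescribed diverging sequence, even with freedom of size $W$ around it, has at most $W^{K(n)}$ dyadic $n$-cylinders, where $K(n)\sim n^{1-\gamma}$ is the number of blocks before position $n$; its Hausdorff dimension is therefore at most $\limsup_n K(n)\log_2 W/n=0$ unless $W$ grows at least like $2^{cn^\gamma}$. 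But a variation of size $W$ in $L_k$ multiplies the dominant term $2^{L_k}$ of $S_{p_{k+1}-1}$ by $2^{\pm W}$, which destroys the required asymptotic $S_{p_{k+1}-1}\sim\beta\Psi(p_{k+1}-1)$ unless $W=O(1)$. No perturbation of a smooth template therefore gives positive dimension. The idea you are missing is the one behind Lemma~\ref{l2}: keep almost all blocks of \emph{bounded} length $\le m$ (these contribute only $O(n2^m/m)=o(\Psi(n))$ to $S_n$), and plant a single designed word $w_k$ of length $a_k=o(N_k-N_{k-1})$ (supplied by Lemma~\ref{lem:2}) in each window $[N_{k-1},N_k]$ to deliver the increment $W_k=\Psi(N_k)-\Psi(N_{k-1})$. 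Since the designed words occupy a vanishing proportion of positions, the remaining freedom yields dimension at least $(m-1)/m\to1$, realised through a H\"older map onto $F_m$ and Lemma~\ref{lem:3} rather than through a mass-distribution/local-dimension computation.

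Your emptiness argument for $\gamma\in[1/2,1)$, $\beta\in(0,\infty)$ is also wrong. You claim the amplitude $A_k=\beta(2^{\gamma L_k p_k^{\gamma-1}}-1)$ stays bounded away from $0$ because $\gamma L_kp_k^{\gamma-1}\asymp\gamma p_k^{2\gamma-1}$, but this presupposes $L_k\asymp p_k^\gamma$, which an arbitrary $x\in E_\Psi(\beta)$ does \emph{not} satisfy. Comparing the two limits at $n=p_\ell$ and $n=p_\ell+1$ forces, as in \eqref{ineq:nj}, $L_\ell<\varepsilon p_\ell^{1-\gamma}$ for all large $\ell$; hence $\gamma L_\ell p_\ell^{\gamma-1}<\gamma\varepsilon$, so $A_\ell\to0$ and the intra-block ratio in fact returns uniformly close to $\beta$ --- there is no non-vanishing oscillation to exploit. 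The genuine obstruction is a global size bound: with all $L_j<\varepsilon p_\ell^{1-\gamma}$ one has $S_{p_\ell}\le M+\ell\,2^{\varepsilon p_\ell^{1-\gamma}}$ with $\ell\le p_\ell$, which for $\gamma\ge1/2$ and $\varepsilon$ small is $o(2^{p_\ell^\gamma})=o(\Psi(p_\ell))$, contradicting the limit. Similarly your $\gamma\ge1$ sketch is inconclusive at $\gamma=1$, since the inequality $R_n\ge n^\gamma-\log_2 n+O(1)$ you derive is compatible with the a priori bound $R_n\le n-1$; the paper's one-line observation that $S_{p_\ell}/S_{p_\ell-1}\to1$ while $\Psi(p_\ell)/\Psi(p_\ell-1)\ge2$ gives the contradiction directly.
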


We remark that by the above discussion on the convergence of $S_n(x)$, for all cases in Theorem \ref{t2}, the sets $E_\Psi(0)$ has full measure, and thus obviously has full Hausdorff dimension.

From the definition of $S_n(x)$, we see that for the integer $n$ such that $\epsilon_n(x)=1$, one has $S_n(x)=S_{n-1}(x)+1$, which implies for all $x$, $\liminf\limits_{n\to\infty} {S_{n}(x) \over S_{n-1}(x) }=1$. Thus if $\liminf\limits_{n\to\infty} {\Psi(n) \over \Psi(n-1) }>1$, then for any $\beta\in (0,\infty)$, the set $E_\Psi({\beta})$ is empty. By the definition of $S_n(x)$, we can also check that for all $x$, for the integer $n$ such that $\epsilon_n(x)=1$, we have $ S_n(x)\leq 2^{n}-1$. This implies $\liminf\limits_{n\to\infty} S_n(x)/2^n \leq 1$ (See also the formula (\ref{3.15}) in Section \ref{sec:fast}). Hence, for 'regular' growth functions $\Psi$ we only need to consider exponential and subexponential growth rates.

However, if we pick a point $x$ with dyadic expansion consisting mostly of $0$'s, with infinitely many $1$'s but in large distances from each other, then the Birkhoff sum $S_{n_i}(x)$ may grow arbitrarily fast on some subsequence $n_i$.
Thus for any increasing $\Psi(n)$ 
there exists a point $x$ such that $\limsup_{n \to \infty} S_n(x) / \Psi(n) = \infty$.

\medskip
Our study on the Saint-Petersburg potential is an attempt of multifractal analysis of unbounded potential functions on the doubling map dynamical system. However, the Saint-Petersburg potential is locally constant and not continuous. One might think of an another unbounded potential function $g: x\mapsto 1/x$ which is close to the Saint-Petersburg potential but is continuous.
In fact, our method for studying the fast increasing Birkhoff sum of Saint-Petersburg potential also works for the fast increasing Birkhoff sum of the  potential $g: x\mapsto 1/x$.

Denote by $S_ng(x)$ the Birkhoff sum
\[
 S_ng(x):=g(x)+g(T(x)) + \cdots+g(T^{n-1}(x)), \qquad  x\in (0,1].
\]
%
For $\beta \in [0,\infty]$, let $$
F_\Psi({\beta}):=\left\{x\in (0,1]: \lim_{n\to \infty}\frac{1}{\Psi(n)}S_ng(x)=\beta\right\}.
$$
\begin{thm}\label{t3}  
If $\Psi(n)$ is
one of the following
\[
\Psi(n)=n\log n, \ \Psi(n)=n^a \ (a>1), \ \Psi(n)=2^{n^{\gamma}} \ (0<\gamma<1/2),
\]
then for any $\beta  \in [0,\infty]$, $\dim_HF_\Psi({\beta})=1$.

If $\Psi(n)=2^{n^{\gamma}}$ with $1/2\le \gamma<1$, then for any $\beta  \in (0,\infty)$, the set $F_\Psi({\beta})$ is empty,  and $\dim_HF_\Psi({\beta})=1$ for $\beta=0, +\infty$.

If $\Psi(n)=2^{n^{\gamma}}$ with $\gamma\ge 1$, then for any $\beta  \in (0,\infty]$, the set $F_\Psi({\beta})$ is empty,  and $\dim_HF_\Psi({\beta})=1$ for $\beta=0$.

\end{thm}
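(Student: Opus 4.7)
The plan is to transfer the analysis of Theorem~\ref{t2} to the continuous potential $g(x)=1/x$ via the elementary comparison
\[
\varphi(x)\le g(x)<2\varphi(x),\qquad x\in(0,1],
\]
which holds because $x\in(2^{-n-1},2^{-n}]$ gives $\varphi(x)=2^{n}$ and $g(x)=1/x\in[2^{n},2^{n+1})$. Summing along orbits yields $S_{n}\varphi(x)\le S_{n}g(x)\le 2S_{n}\varphi(x)$, so $E_{\Psi}(0)\subseteq F_{\Psi}(0)$ and $E_{\Psi}(\infty)\subseteq F_{\Psi}(\infty)$. Together with Theorem~\ref{t2} this already settles $\dim_{H}F_{\Psi}(\beta)=1$ for $\beta\in\{0,\infty\}$ in every regime where the statement claims this equality.

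For the emptiness assertions I would reprise the arguments sketched in the remarks preceding Theorem~\ref{t3}. At every integer $n$ with $\epsilon_{n}(x)=1$ one has $T^{n-1}x\in(1/2,1]$, hence $g(T^{n-1}x)\in[1,2)$, so $S_{n}g(x)-S_{n-1}g(x)\in[1,2)$ for infinitely many $n$ (for every non-dyadic $x$). When $\Psi(n)=2^{n^{\gamma}}$ with $\gamma\ge 1$, $\Psi(n)/\Psi(n-1)\ge 2$, so any finite limit $\beta\in(0,\infty)$ of $S_{n}g(x)/\Psi(n)$ would force $S_{n}g(x)-S_{n-1}g(x)\sim\beta(\Psi(n)-\Psi(n-1))\to\infty$, absurd. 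For $\beta=+\infty$ and $\gamma\ge 1$, the estimate $S_{n}\varphi(x)\le 2^{n}-1$ at the same times combines with $S_{n}g<2S_{n}\varphi$ to give $\liminf_{n}S_{n}g(x)/\Psi(n)\le 2$, ruling out divergence. The remaining case $\gamma\in[1/2,1)$, $\beta\in(0,\infty)$, follows by the same oscillation argument as for $\varphi$: assuming $S_{n}g(x)/\Psi(n)\to\beta$, a comparison of $S_{n}g$ at $n=a_{i}$, $n=a_{i}+1$, and $n=a_{i+1}$, together with $\Psi(a_{i+1})/\Psi(a_{i})\to 1$ and the explicit formula for the $g$-contribution of each block, produces a balance equation for the block length $d_{i}=a_{i+1}-a_{i}$ that is only consistent when $\gamma<1/2$.

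It remains to establish $\dim_{H}F_{\Psi}(\beta)=1$ for $\beta\in(0,\infty)$ in the first regime. I would adapt the construction of $E_{\Psi}(\beta)$ from Theorem~\ref{t2}: prescribe an increasing sequence of marker positions $(a_{i})$ with $\epsilon_{a_{i}}(x)=1$ and $\epsilon_{k}(x)=0$ for $a_{i-1}<k<a_{i}$. A direct expansion yields
\[
T^{a_{i}+k}x=2^{-(d_{i}-k)}\bigl(1+T^{a_{i+1}}x\bigr),\qquad 0\le k\le d_{i}-1,
\]
so the $g$-contribution of the $i$-th block equals $2(2^{d_{i}}-1)/(1+T^{a_{i+1}}x)$, exactly a factor $2/(1+T^{a_{i+1}}x)$ times its $\varphi$-counterpart; this factor depends only on the free continuation after position $a_{i+1}$ and has Lebesgue mean $\log 2$. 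Choosing $(a_{i},d_{i})$ as in the Saint-Petersburg construction targeting $\beta/(2\log 2)$ for $\varphi$ matches the \emph{expected} $g$-Birkhoff sum to $\beta\Psi(n)$; a second-moment concentration on the (essentially independent) factors $1/(1+T^{a_{i+1}}x)$ then upgrades this to $S_{n}g(x)/\Psi(n)\to\beta$ on a rich set of admissible continuations, and because the imposed digits occupy only $\sum_{a_{i}\le n}d_{i}=o(n)$ positions in each of the three growth regimes, the mass-distribution techniques of \cite{FLWW,FJLR} that drive Theorem~\ref{t2} yield Hausdorff dimension $1$.

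The main obstacle is precisely this last paragraph: upgrading the probabilistic concentration of the random factors $1/(1+T^{a_{i+1}}x)$ from a full-measure to a full-dimensional statement, particularly in the regime $\Psi(n)=2^{n^{\gamma}}$ with $\gamma<1/2$, where the $g$-sum is dominated by a few large blocks rather than many small ones, so no immediate law-of-large-numbers applies. Everything else in the proof reduces to the two-line estimate $\varphi\le g<2\varphi$ and the elementary behaviour of $g$ on the unit cylinder $I$.
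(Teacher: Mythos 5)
Your reduction to the Saint-Petersburg case via $\varphi\le g<2\varphi$ disposes cleanly of every claim except one: the inclusions $E_\Psi(0)\subseteq F_\Psi(0)$ and $E_\Psi(\infty)\subseteq F_\Psi(\infty)$ settle $\beta\in\{0,\infty\}$, and your emptiness arguments (oscillation at consecutive times for $\gamma\ge 1$; the $\liminf S_ng/\Psi(n)\le 2$ bound from $S_n\varphi\le 2^n-1$; the $\gamma\in[1/2,1)$ case mirroring the $\varphi$-computation) are essentially those in the paper, which also deduces them from the same two-sided comparison.

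The gap is where you flag it, and it is genuine, not cosmetic. In the lower-bound construction for $\beta\in(0,\infty)$, the $g$-contribution of the $i$-th block is indeed $\frac{2}{1+T^{a_{i+1}}x}\cdot(2^{d_i}-1)$, but that correction factor is a function of the \emph{entire tail} of $x$ beyond $a_{i+1}$ and takes values in $[1,2)$. In the regime $\Psi(n)=2^{n^\gamma}$ with $\gamma<1/2$ (and already for $\Psi(n)=n^a$), the sum $S_ng$ is dominated by the single long block most recently closed, so there is no averaging over many blocks and no second-moment concentration to invoke: the error coming from one factor $2/(1+T^{a_{i+1}}x)$ is comparable to the whole target $\beta\Psi(n)$. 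A measure-theoretic concentration on a set of full measure for some auxiliary Gibbs measure would also not directly give Hausdorff dimension $1$ without a separate entropy/dimension estimate.

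The paper closes this gap deterministically rather than probabilistically, and it is worth absorbing the trick. Instead of the word $w=(10^{t_1-1}1,\dots,10^{t_p-1}1)$ of Lemma~\ref{lem:2}, one uses
\[
w=(10^{t_1-1}1^{s+1},\,1\,0^{t_2-1}1^{s+1},\dots,1\,0^{t_p-1}1^{s+1}),
\]
that is, each target block $0^{t_i-1}1$ is followed by a run $1^s$. For such prefixes, if $y=[0^{t}1^{s}\cdots]$ then $2^{t}\le g(y)\le 2^{t}(1+2^{-s+1})$; thus on the whole cylinder $I_{|w|}(w)$ the $g$-Birkhoff sum over the block is pinned within a multiplicative factor $(1+2^{-s})$ of its $\varphi$-counterpart, \emph{uniformly over the tail}. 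The random factor $2/(1+T^{a_{i+1}}x)$ in your computation is exactly what this padding removes. One then lets $s=s_k\to\infty$ slowly enough that the added length $p\,s_k=O(n_k s_k)$ is still $o(N_k-N_{k-1})$ and $s_k(n_k+2)=o\bigl(\Psi(N_k)-\Psi(N_{k-1})\bigr)$, after which the proof of Lemma~\ref{l2} (Cantor set, H\"older projection to $F_m$, dimension $\ge(1-\eta)\frac{m-1}{m}$) goes through verbatim. This gives $\dim_HF_\Psi(\beta)=1$ for $\beta\in(0,\infty)$ in the first regime, and likewise $\dim_HF_\Psi(\infty)=1$ for $1/2\le\gamma<1$ by the same zero-run construction as for $\varphi$.
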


We remark that these multifractal analysis on the Birkhoff sums of fast increasing rates have been done for some special potentials in continued fraction dynamical system (\cite{FLWW, LR1, LR2}).


%

%

\bigskip
\bigskip
\section{Birkhoff spectrum of the Saint-Petersburg potential}\label{sec:2}

In this section, we will obtain the Birkhoff spectrum of the Saint-Petersburg potential, i.e. the Hausdorff dimension of the following level set:
$$
E(\alpha):=\left\{x\in {(0,1]}: \lim_{n\to \infty}\frac{1}{n}\sum_{j=0}^{n-1}\varphi(T^jx)=\alpha\right\}\quad (\alpha\geq 1).
$$
We will transfer our question to a Birkhoff spectrum problem for an interval map with infinitely many branches.

\subsection{Transference lemma}

Recall that the Saint-Petersburg potential $\varphi$ is given by $$
\varphi(x)=2^{n}, \ {\rm{if}}\ \ x=[0^{n}1,\cdots]
$$where $x=[\epsilon_1\epsilon_2,\cdots]$ denotes the digit sequence in the binary expansion of $x$.
Recall also the definition of hitting time $n(x)$ and the acceleration $\widehat T$ of the doubling map $T$ in Section \ref{sec:intro}.
Define a new potential function
$$\phi(x):=2^{n(x)+1}-1, \ x\in (0,1].$$
In fact, $\phi$ is nothing but the function satisfying$$
\phi(x)=\sum_{j=0}^{n(x)}\varphi(T^jx).
$$
With the notation $n_1=n(x)+1\ge 1$, and $n_k=n(\widehat{T}^{k-1} x)+1$ for $k\geq 2$ given in Section \ref{sec:intro}, we have $$
\phi(\widehat{T}x)=\sum_{j=0}^{n(\widehat{T}x)}\varphi(T^j(\widehat{T}x))=\sum_{j=n_1}^{n_2-1}\varphi(T^jx).
$$
Hence,
\begin{equation}\label{f1}
\sum_{j=0}^{n_1+\cdots+n_{\ell}-1}\varphi(T^jx)=\sum_{k=0}^{\ell-1}\phi(\widehat{T}^kx)=2^{n_1}+\cdots+2^{n_{\ell}}-\ell.
\end{equation}
Note that the derivative of $\widehat{T}$ satisfies
\begin{align}\label{logTprime}
|\widehat{T}'|(x)=2^{n(x)+1}=2^{n_1}=\phi(x)+1.
\end{align}
We have
$$n_1+\cdots+n_{\ell}=\sum_{k=0}^{\ell-1}\log_2 |\widehat{T}'|(\widehat{T}^kx).$$

Recall the set in question: $$
E(\alpha)=\left\{x\in (0,1]: \lim_{n\to \infty}\frac{1}{n}\sum_{j=0}^{n-1}\varphi(T^jx)=\alpha\right\} \qquad (\alpha\geq 1).
$$
Define $$
\widetilde{E}(\alpha):=\left \{x\in (0,1]: \lim_{\ell\to \infty}\frac{\sum_{k=0}^{\ell-1}\phi(\widehat{T}^kx)}{\sum_{k=0}^{\ell-1}\log_2 |\widehat{T}'|(\widehat{T}^kx)}=\alpha \right\} \qquad (\alpha\geq 1).
$$

The following lemma shows the two level sets are the same.
\begin{lem}\label{lem:transference}
For all $\alpha \geq 1$, we have $E(\alpha)=\widetilde{E}(\alpha)$.
\end{lem}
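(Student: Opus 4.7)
The plan is to exploit identity (\ref{f1}) together with the derivative formula (\ref{logTprime}) to rewrite the ratio appearing in the definition of $\widetilde{E}(\alpha)$ as $S_n/n$ along a specific subsequence, and then to upgrade subsequential convergence to full convergence.

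Set $N_\ell := n_1(x) + \cdots + n_\ell(x)$. By (\ref{f1}) we have $S_{N_\ell}(x) = \sum_{k=0}^{\ell-1}\phi(\widehat{T}^kx)$, and by (\ref{logTprime}) we have $N_\ell = \sum_{k=0}^{\ell-1}\log_2|\widehat{T}'|(\widehat{T}^kx)$. Dividing, the defining ratio of $\widetilde{E}(\alpha)$ equals $S_{N_\ell}(x)/N_\ell$, so
$$
\widetilde{E}(\alpha) = \{x\in(0,1] : S_{N_\ell}(x)/N_\ell \to \alpha\}.
$$
The inclusion $E(\alpha)\subset\widetilde{E}(\alpha)$ is then immediate: convergence of $S_n(x)/n$ to $\alpha$ passes to the subsequence $n=N_\ell$.

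For the reverse inclusion, fix $x\in\widetilde{E}(\alpha)$ and, for any $n\geq 1$, let $\ell=\ell(n)$ be the unique index with $N_\ell\leq n < N_{\ell+1}$. Since $\varphi\geq 1$, the sequence $\{S_n(x)\}$ is nondecreasing, so we have the sandwich
$$
\frac{S_{N_\ell}(x)}{N_{\ell+1}}\;\leq\; \frac{S_n(x)}{n}\;\leq\; \frac{S_{N_{\ell+1}}(x)}{N_\ell}.
$$
Both outer ratios converge to $\alpha$ as soon as $N_{\ell+1}/N_\ell\to 1$, equivalently $n_{\ell+1}/N_\ell\to 0$; in that case $S_n(x)/n\to \alpha$, i.e.\ $x\in E(\alpha)$.

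The crucial step is therefore to verify $n_{\ell+1}/N_\ell\to 0$, which is where I expect the main obstacle to be. From $S_{N_{\ell+1}}-S_{N_\ell}=\phi(\widehat{T}^\ell x)=2^{n_{\ell+1}}-1$ and the hypothesis $S_{N_{\ell+1}}/N_{\ell+1}\to\alpha$, for any $\epsilon>0$ and all sufficiently large $\ell$ we obtain
$$
2^{n_{\ell+1}} \;\leq\; (\alpha+\epsilon)(N_\ell + n_{\ell+1}) + 1.
$$
Comparing exponential with linear growth, I split into cases. If $n_{\ell+1}>N_\ell$, then $N_\ell+n_{\ell+1}<2n_{\ell+1}$ and so $2^{n_{\ell+1}}\leq 3(\alpha+\epsilon)\,n_{\ell+1}$, which fails for $n_{\ell+1}$ larger than some absolute constant $M=M(\alpha,\epsilon)$; hence for all $\ell$ with $N_\ell\geq M$ we must have $n_{\ell+1}\leq N_\ell$. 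In this regime $N_\ell+n_{\ell+1}\leq 2N_\ell$ yields $2^{n_{\ell+1}}\leq 3(\alpha+\epsilon)\,N_\ell$, i.e.\ $n_{\ell+1}=O(\log N_\ell)$. Since $N_\ell\geq \ell\to\infty$, this proves $n_{\ell+1}/N_\ell\to 0$ and finishes the proof.
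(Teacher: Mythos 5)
Your proof is correct and follows essentially the same route as the paper: identify the $\widehat T$-ratio with $S_{N_\ell}/N_\ell$, sandwich $S_n/n$ between $S_{N_\ell}/N_{\ell+1}$ and $S_{N_{\ell+1}}/N_\ell$, and force $n_{\ell+1}$ to be logarithmically small by comparing the exponential jump $2^{n_{\ell+1}}-1$ with the near-linear growth of $S_{N_\ell}$. The only cosmetic difference is that you observe $n_{\ell+1}/N_\ell\to 0$ already suffices, whereas the paper uses both the upper and lower halves of the hypothesis to prove the nominally stronger $2^{n_{\ell+1}}=o(N_\ell)$; both yield the same conclusion.
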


\begin{proof}
It is evident that ${E(\alpha)\subset \widetilde{E}(\alpha)}$, because, as discussed above,
\begin{equation}\label{f2}
\frac{\sum_{k=0}^{\ell-1}\phi(\widehat{T}^kx)}{\sum_{k=0}^{\ell-1}\log_2  |\widehat{T}'|(\widehat{T}^kx)}=\frac{1}{n_1+\cdots+n_{\ell}}\sum_{j=0}^{n_1+\cdots+n_{\ell}-1}\varphi(T^jx).
\end{equation}

Now, we show the other direction. Take an $x\in \widetilde{E}(\alpha)$, express $x$ in its binary expansion
$$x=[0^{n_1-1} 10^{n_2-1} 1 \cdots 0^{n_{\ell}-1} 1\cdots].$$
In fact, $n_{\ell}-1$ is the recurrence time for $n(\widehat{T}^{{\ell}-1}x)$, for each ${\ell}\ge 1$.

By \eqref{f1}, we have, at present, $$
\lim_{\ell\to \infty}\frac{1}{n_1+\cdots+n_{\ell}}\sum_{j=0}^{n_1+\cdots+n_{\ell}-1}\varphi(T^jx)=\alpha.
$$
So, we are required to check it holds for all $n$.

For any $\epsilon>0$, there exists ${\ell}_0\in \N$ such that, for any ${\ell}\ge {\ell}_0$,
\begin{equation}\label{f3}
\alpha-\epsilon\le \frac{2^{n_1}+\cdots+2^{n_{\ell}}-{\ell}}{n_1+\cdots+n_{\ell}}\le \alpha+\epsilon.
\end{equation}

For any $n_1+\cdots+ n_{\ell}< n<n_1+\cdots+n_{\ell}+n_{{\ell}+1}$ with ${\ell}\ge {\ell}_0$, it is trivial that $$
\frac{2^{n_1}+\cdots+2^{n_{\ell}}-{\ell}}{n_1+\cdots+n_{\ell}+n_{{\ell}+1}}\le \frac{1}{n}\sum_{j=0}^{n-1}\varphi(T^jx)\le \frac{2^{n_1}+\cdots+2^{n_{\ell}}+2^{n_{{\ell}+1}}-{\ell}-1}{n_1+\cdots+n_{\ell}}.
$$ Thus, it suffices to show that
\begin{equation}\label{n-ell-small}
2^{n_{{\ell}+1}}=o(n_1+\cdots+n_{\ell}), 
\end{equation}
which also implies
\[
{n_{{\ell}+1}}=o(n_1+\cdots+n_{\ell}).
\]
Let $M_0$ be a large integer such that, for all $M\ge M_0$, $2^{M}\ge 4\alpha M$. So, when $n_{{\ell}+1}\le M_0$, there is nothing to prove. So, we always assume $2^{n_{{\ell}+1}}\ge 4\alpha n_{{\ell}+1}$.

By \eqref{f3}, we have
\begin{align*}
2^{n_1}+\cdots+2^{n_{\ell}}-{\ell} &\ge (\alpha-\epsilon)(n_1+\cdots+n_{\ell}), \\
2^{n_1}+\cdots+2^{n_{\ell}}+2^{n_{{\ell}+1}}-{\ell}-1&\le (\alpha+\epsilon)(n_1+\cdots+n_{\ell}+n_{{\ell}+1}).
\end{align*}
These give $$
2^{n_{{\ell}+1}}\le 2\epsilon(n_1+\cdots+n_{\ell})+(\alpha+\epsilon)n_{{\ell}+1}+1.
$$
So, we have
\begin{equation*}
2^{n_{{\ell}+1}}\le 4\epsilon (n_1+\cdots+n_{\ell}).  \qedhere
\end{equation*}
\end{proof}

\subsection{Dimension of $\widetilde{E}(\alpha)$}

Now we calculate the Hausdorff dimension of the set $\widetilde{E}(\alpha)$. At first, we give a notation.
\begin{itemize}
\item For each finite word $w\in \bigcup_{n\ge 1}\{0,1\}^n$ of length $n$, a $T$-{\it dyadic} cylinder of order $n$ is defined as $$
I_n(w)=\{x\in (0,1]: (\epsilon_1 (x),\cdots,\epsilon_n(x))=w\}.
$$

\item For $(n_1,\cdots,n_{\ell})\in (\N\setminus \{0\})^{\ell}$, a $\widehat{T}$-{\it dyadic} cylinder of order $\ell$ is defined as $$
D_{\ell}(n_1,\cdots,n_{\ell})=\{x\in (0,1]: n_k(x)=n_k, \ 1\le k\le \ell\}.
$$ \end{itemize}
\begin{proof}[Proof of Theorem \ref{main-1}]  
To calculate the Hausdorff dimension of $\widetilde{E}(\alpha)$, we construct a suitable measure supported on $\widetilde{E}(\alpha)$. The Gibbs measures derived from the Ruelle-Perron-Frobenius transfer operator are good candidates for such a measure.

In fact, by considering the inverse branches $U_i: x\mapsto {x+1 \over 2^{i+1}} \ (i\geq 0)$ of $\widehat{T}$, we can code the dynamical system $([0,1], \widehat{T})$ by the conformal infinite iterated function system $(U_i)_{i\geq 0}$ which satisfies the open set condition (\cite[Section 1]{HMU}).

Consider the potential function with two parameters $$\psi_{t,q}:=-t\log |\widehat{T}'|-(\log 2)\cdot q\phi \quad ( t \in \mathbb{R}, \ q>0).$$
Then $(\psi_{t,q}\circ U_i)_{i\geq 0}$ is a family of strong H\"older family (\cite[Page 30]{HMU}).
Hence, we can define a Ruelle operator
\[
    \mathcal{L}_{t,q} f(x):= \sum_{y\in \widehat{T}^{-1}x} e^{\psi_{t,q}(y)} f(y),
\]
on the Banach space of continuous functions on the corresponding infinite symbolic space (\cite[Page 31]{HMU}).

By the Ruelle-Perron-Frobenius transfer operator theory \cite[Theorems 2.9 and 2.10]{HMU}, for any $q>0$ (to satisfy the condition 2.2 of \cite{HMU}), we can find an eigenvalue $\lambda_{t,q}$ and an eigenfunction $h_{t,q}$ for $\mathcal{L}_{t,q}$ and an eigenfunction $\nu_{t,q}$ for the conjugate operator $\mathcal{L}_{t,q}^*$. Then the pressure function $P(t,q)=\log \lambda_{t,q}$ and the ergodic Gibbs measure $\mu_{t,q}$ is given by
$h_{t,q}\cdot \nu_{t,q}$.

The pressure function can be computed by (see \cite[Pages 31 and 48]{HMU})
\begin{eqnarray*}
  P(t,q)= \lim_{\ell\to\infty}\frac{1}{\ell} \log \left(\sum_{(n_1, \cdots, n_\ell) \in (\N\setminus \{0\})^{\ell}} \exp \sup_{x\in D_{\ell}(n_1,\cdots,n_{\ell})}(S_{\ell}\psi_{t,q} (x))\right).
\end{eqnarray*}
Note that for all $n\geq 1$,
\[
|\widehat{T}'|(x)=2^{n}, \ \text{and} \ \phi(x)=2^{n}-1\ \  \text{if} \ x\in \left({1\over 2^{n}}, {1\over 2^{n-1}}\right]=D_1(n).
\]
Then for $x\in D_{\ell}(n_1,\cdots,n_{\ell})$,
\[
S_{\ell}\psi_{t,q} (x)= -(\log 2) \cdot t \cdot \sum_{j=1}^\ell n_j +(\log 2) \cdot q \cdot \sum_{j=1}^\ell (2^{n_j}-1).
\]
Thus
\begin{eqnarray*}
& & \sum_{(n_1, \cdots, n_\ell) \in (\N\setminus \{0\})^{\ell}} \exp \sup_{x\in D_{\ell}(n_1,\cdots,n_{\ell})}(S_{\ell}\psi_{t,q} (x))\\
&=&  \sum_{n_1=1}^\infty\cdots \sum_{n_\ell=1}^\infty \left( \prod_{j=1}^\ell 2^{-tn_j} \cdot \prod_{j=1}^\ell 2^{q(2^{n_j}-1)}\right)=\left(\sum_{j=1}^{\infty} 2^{-tj-q(2^j-1)}\right)^\ell.
\end{eqnarray*}
Hence
\begin{eqnarray*}
  P(t,q)= \log \sum_{j=1}^{\infty} 2^{-tj-q(2^j-1)}.
\end{eqnarray*}

Now we calculate the local dimension of the Gibbs measure $\mu_{t,q}$. Let $D_{\ell}(x)$ be the $\widehat T$-dyadic cylinder containing $x$ of order $\ell$.
By the Gibbs property of $\mu_{t,q}$,
\begin{align}\label{ff1}
\begin{split}
   \frac{\log \mu_{t,q} (D_{\ell}(x))}{\log |D_{\ell}(x)|}&=\frac{S_{\ell}\psi_{t,q}(x)- \ell P(t,q)}{-S_{\ell}\log |\widehat{T}'|(x)}\\
   &=\frac{-tS_{\ell}\log |\widehat{T}'|(x)-(\log 2)\cdot q S_{\ell}\phi(x)-\ell P(t,q)}{-S_{\ell}\log |\widehat{T}'|(x)}\\
   &= t + q\frac{S_{\ell}\phi(x)}{S_{\ell}\log_2 |\widehat{T}'|(x)}+ \frac{\ell P(t,q)}{S_{\ell}\log |\widehat{T}'|(x)}.
   \end{split}
\end{align}

$\bullet$ {\sc Upper bound.}  For each $q>0$, let $t(q)$ be the the number such that $P(t(q), q)=0$.  (The existence of $t(q)$ comes from the facts that $P(t,q)$ is real-analytic and that for fixed $q>0$, $P(t,q)>0$ when $t\to -\infty$ and $P(t,q)<0$ when $t\to +\infty$.)  Then for all $x\in \widetilde{E}(\alpha)$, we have
\begin{align*}
   \liminf_{r\to 0} {\log \mu_{t,q}(B(x,r)) \over \log r} &\le \liminf_{\ell\to \infty}\frac{\log \mu_{t,q}(B(x, |D_{\ell}(x)|))}{\log |D_{\ell}(x)|}\\&\leq \liminf_{\ell\to \infty}\frac{\log \mu_{t,q} (D_{\ell}(x))}{\log |D_{\ell}(x)|}=t(q) + q{\alpha},
\end{align*} where for the second inequality the trivial inclusion $D_{\ell}(x)\subset B(x, |D_{\ell}(x)|)$ is used.
By Billingsley Lemma (see e.g. \cite[Proposition 4.9.]{Fa1}), this gives an upper bound of the Hausdorff dimension of $\widetilde{E}(\alpha)$. Thus we have
\[
\dim_H \widetilde{E}(\alpha) \leq \inf_{q>0} \{t(q)+q{\alpha}\}.
\]


$\bullet$ {\sc Lower bound.}  By the real-analyticity of $P(t,q)$ and the implicit function theorem, the function $q\mapsto t(q)$ is also real-analytic. Thus there exists $q_0$ such that the following infimum is attained
\[
   \inf _{q>0}\{ t(q)+q{\alpha}\}.
\]
Then we have \begin{equation}\label{t'} t'(q_0)+\alpha=0.\end{equation}

To prove the lower bound, we first show two claims.

\smallskip

Claim (A): The measure $\mu_{t(q_0), q_0}$ is supported on $E_{\alpha}$.

\smallskip
On the one hand, since $P(t(q), q)=0$,
\begin{equation}\label{derivative-q}
\frac{\partial P}{\partial t} t'(q)+\frac{\partial P}{\partial q}=0.
\end{equation}
On the other hand, by the ergodicity of the measure $\mu_{t,q}$, we have for $\mu_{t,q}$ almost all $x$,
\[
\lim_{\ell\to\infty}\frac{S_{\ell}\phi(x)}{S_{\ell}\log_2 |\widehat{T}'|(x)}= \frac{\int\phi d\mu_{t,q}}{\int \log |\widehat{T}'| d\mu_{t,q} }\cdot \log 2.
\]
By Ruelle-Perron-Frobenius transfer operator theory (\cite{HMU}, Proposition 6.5),
\[\int (\log 2) \cdot \phi d\mu_{t,q}= -\frac{\partial P}{\partial q} \quad \text{and} \quad\int \log |\widehat{T}'| d\mu_{t,q} = -\frac{\partial P}{\partial t}.\]
Thus by (\ref{derivative-q}) and then  (\ref{t'}), for $\mu_{t(q_0), q_0}$ almost all $x$,
\[
\lim_{\ell\to\infty}\frac{S_{\ell}\phi(x)}{S_{\ell}\log_2 |\widehat{T}'|(x)}=\frac{\frac{\partial P}{\partial q}}{\frac{\partial P}{\partial t}}=-t'(q_0)=\alpha.
\]
This shows Claim (A).

\smallskip
Claim (B): For $\mu_{t(q_0), q_0}$ almost all $x$,
\[
  \lim_{n\to\infty} {\log \mu_{t(q_0),q_0} (I_n(x)) \over \log 2^{-n}}=t(q_0)+q_0 \alpha,
\]
where $I_n(x)$ is the $T$-dyadic cylinder of order $n$ containing $x$.

\smallskip

On the one hand, by \eqref{ff1} and then by (\ref{derivative-q}) and \eqref{t'}, one has for $\mu_{t(q_0),q_0}$ almost all $x$
\begin{equation}\label{local-basic}
\lim_{\ell\to \infty}\frac{\log \mu_{t(q_0),q_0}(D_{\ell}(x))}{\log |D_{\ell}(x)|}=t(q_0) + q_0\frac{\frac{\partial P}{\partial q}}{\frac{\partial P}{\partial t}}=t(q_0)+q_0 \alpha.
\end{equation}
On the other hand, note that for any $x\in E(\alpha)$, if the binary expansion of $x$ is $x=[0^{n_1-1}10^{n_2-1}1\dots]$, then for any $\delta>0$, for $\ell$ large enough,
\[
(\alpha-\delta)\ell \leq 2^{n_1}+\cdots+ 2^{n_\ell}-\ell=S_\ell\phi(x) \leq (\alpha + \delta) \ell.
\]
Hence 
\[
n_\ell = O(\log \ell),
\]
which implies
\begin{equation}\label{length-basic}
\lim_{\ell\to \infty}{\log |D_\ell(x)| \over \log |D_{\ell+1}(x)|}= \lim_{\ell\to \infty} { {n_1+\cdots +{n_\ell}} \over {n_1+\cdots +{n_\ell}+{n_{\ell+1}}}}=1.
\end{equation}
Thus \[
  \lim_{n\to\infty} {\log \mu_{t(q_0),q_0} (I_n(x)) \over \log 2^{-n}} =\lim_{\ell\to \infty}\frac{\log \mu_{t(q_0),q_0}(D_{\ell}(x))}{\log |D_{\ell}(x)|}.
\]
This shows Claim  (B).

To conclude the desired lower bound, we apply the classical mass distribution principle (see \cite[Proposition 4.2]{Fa1}).
Since the Hausdorff dimension will not be changed if we replace the $\delta$-coverings by $T$-dyadic cylinder coverings (see \cite[Section 2.4]{Fa1}),
the lower bound of the Hausdorff dimension can be given by the mass transference principle on $T$-dyadic cylinders.
By the above two claims and Egorov's theorem, for any $\eta>0$, there exists an integer $N_0$ such that the set $$
\Big\{x\in E_{\alpha}: \mu(I_n(x))\le |I_n(x)|^{t(q_0)+q_0 \alpha-\eta}, \ n\ge N\Big\} $$
is of $\mu_{t(q_0),q_0}$ positive measure. So, it implies that $$
\dim_HE_{\alpha}\ge t(q_0)+q_0 \alpha-\eta.
$$



Note that (\cite[Lemma 7.5]{HMU}) the function $q\mapsto t(q)$ is a decreasing convex function 
such that
\[t(0)=1, \quad \lim_{q\to\infty} (t(q) +q)=0,\]
and
\[
\lim_{q\to 0^+} t'(q)=-\infty, \quad \lim_{q\to+\infty}t'(q)=-1.
\]
Therefore, we have proved for any $\alpha \in (1, +\infty)$
\[
\dim_{H}(\widetilde{E}(\alpha))= \inf_{q>0} \{t(q)+q\alpha\},
\]
which is Legendre transformation.
All the properties stated in Theorem~\ref{main-1} are satisfied by the function $\alpha \mapsto \dim_{H}(\widetilde{E}(\alpha))$ which is the same function as $\alpha \mapsto \dim_{H}({E}(\alpha))$ by Lemma~\ref{lem:transference}.

For the end point $\alpha=1$, it suffices to note that the level set $E(1)$ is nothing but the set of numbers with frequency of the digit $1$ in its binary expansion being $1$.
Thus the Hausdorff dimension of $E(1)$ is $0$. Hence, the Legendre transformation formula for the Hausdorff dimension of $E(\alpha)$ ($\alpha>1$)  also holds for $\alpha=1$.
\end{proof}

\section{Fast increasing Birkhoff sum}\label{sec:fast}

At first, we give two simple observations.

\begin{lem}\label{lem:1}
Let $W$ be an integer such that $2^t\le W<2^{t+1}$ for some positive integer $t$. For any $0\leq n\leq t$, among the integers between $W$ and $W(1+2^{-n})$, there is one $V=V(W,n)$ whose binary expansion of $V$ has at most $n+2$ digits $1$ and ends with at least $t-n$ zeros.
\end{lem}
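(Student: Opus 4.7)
The plan is to take $V$ explicitly as the smallest multiple of $2^{t-n}$ that is at least $W$, namely
\[
V := 2^{t-n}\left\lceil \frac{W}{2^{t-n}} \right\rceil.
\]
I expect this single choice to satisfy all three required properties, and each verification is a short arithmetic check.

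For the containment $V \in [W, W(1+2^{-n})]$, the lower bound is automatic. For the upper bound, I would combine the trivial estimate $V - W < 2^{t-n}$ (any integer lies strictly less than $2^{t-n}$ below the next multiple of $2^{t-n}$) with the consequence $W \cdot 2^{-n} \ge 2^{t-n}$ of the hypothesis $W \ge 2^t$. These together give $V - W < W/2^n$, so $V \le W(1+2^{-n})$.

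The trailing-zero property is immediate: since $V$ is by construction divisible by $2^{t-n}$, its binary expansion ends in at least $t-n$ zeros. To count the ones, I would write $V = 2^{t-n} M$ with $M := \lceil W/2^{t-n}\rceil$. Since $W < 2^{t+1}$, one has $M \le 2^{n+1}$, so $M$ has at most $n+2$ binary digits and hence at most $n+2$ ones (a more careful look actually gives $n+1$, but only $n+2$ is claimed). Multiplication by $2^{t-n}$ merely shifts the expansion and preserves the number of nonzero digits, so $V$ also has at most $n+2$ ones.

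There is no genuine obstacle here; the lemma is a clean pigeonhole-style observation. The only thing requiring care is to reconcile the two bounds $V - W < 2^{t-n}$ and $W/2^n \ge 2^{t-n}$, which is exactly where the hypothesis $2^t \le W < 2^{t+1}$ is used on both sides.
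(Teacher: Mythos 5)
Your construction of $V$ as the smallest multiple of $2^{t-n}$ that is at least $W$ is exactly the integer the paper exhibits (via a pigeonhole count rather than an explicit formula), and the subsequent estimates for the containment, the trailing zeros, and the number of ones coincide with the paper's argument. The proof is correct and takes essentially the same approach.
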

\begin{proof}
By the assumption, we have
$2^{-n}W \geq 2^{t-n} 
$.
Thus among the $2^{-n}W$ consecutive integers from $W$ to $W(1+2^{-n})$ there is at least one integer which is divisible by $2^{t-n}$ which means there is an integer $\ell\geq 1$ such that $$W \leq \ell 2^{k-n} \leq W(1+2^{-n}).$$
Let $V=\ell 2^{t-n}$ and note that $V$ is an integer whose binary expansion ends with at least $t-n$ zeros. Since $\ell 2^{t-n} \leq W(1+2^{-n}) < 2^{t+2}$, we conclude that $\ell 2^{t-n}$ has at most $(t+2)-(t-n)=n+2$ digits $1$ in its binary expansion.
\end{proof}

In the follows, the base of the logarithm is taken to be $2$.
\begin{lem}\label{lem:2}
For each integer $W$, and any integer $n\leq \log W$, we can find a word $w$ with length
\[
|w| \leq (n+2)(2+\log W)
\]
and for any $x\in I_{|w|}(w)$
 $$
W \leq \sum_{j=0}^{|w|-1}\varphi(T^jx) \leq W(1+2^{-n}).
$$
\end{lem}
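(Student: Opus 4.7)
The plan is to apply Lemma~\ref{lem:1} to produce an integer $V\in[W,W(1+2^{-n})]$ whose binary expansion $V=2^{k_1}+\cdots+2^{k_s}$ (with $k_1>\cdots>k_s\ge 0$) uses only $s\le n+2$ summands, and then convert this decomposition into a word $w$ that realizes the prescribed Birkhoff sum on $I_{|w|}(w)$. The key tool, essentially the content of~(\ref{f1}), is that if $w$ is a concatenation of digit-blocks of the form $0^{m-1}1$ with lengths $m_1,\ldots,m_r$, then for every $x\in I_{|w|}(w)$,
\[
\sum_{j=0}^{|w|-1}\varphi(T^jx)=\sum_{i=1}^r (2^{m_i}-1).
\]
So the task reduces to expressing some target in $[W,W(1+2^{-n})]$ as $\sum_i(2^{m_i}-1)$ using few short blocks.

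Concretely, I would take ``main'' blocks of lengths $k_i$ for each index with $k_i\ge 1$, and then append $j:=\max(0,\,s-(V-W))$ extra blocks of length~$1$ (each being the single digit~$1$). Because each extra block contributes $2^1-1=1$ to the Birkhoff sum, the total sum equals $V-s+j$. If $V-W\ge s$, take $j=0$, giving sum $V-s\in[W,V]\subset[W,W(1+2^{-n})]$; if $V-W<s$, take $j=s-(V-W)$, giving sum exactly~$W$. Either way, $0\le j\le s\le n+2$.

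For the length estimate, since $V\le 2W$ every $k_i\le \log W+1$, so the main blocks contribute at most $s(\log W+1)$ symbols, and the $j\le s$ extra blocks contribute at most $s$ more, yielding
\[
|w|\le s(\log W+2)\le (n+2)(2+\log W),
\]
exactly the stated bound. A minor point requiring attention is the possibility that $k_s=0$: that summand of $V$ cannot correspond to a block of positive length, but simply omitting it from the main blocks leaves the identity $V-s+j$ intact, because both $V$'s contribution and the count of main blocks decrease by one.

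I do not foresee a serious obstacle; the argument is really a careful packaging of Lemma~\ref{lem:1} with the block-sum identity~(\ref{f1}). The two technical points to be checked are that every block has length $\ge 1$ (handled by dropping the $k_s=0$ term when present) and that $w$ ends with the digit~$1$ (automatic, since every block ends with~$1$), so that the Birkhoff sum on $I_{|w|}(w)$ is genuinely determined by~$w$.
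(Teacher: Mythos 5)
Your proof is correct. You invoke the same key tool (Lemma~\ref{lem:1}) and the same binary decomposition of $V$, but you translate it into a word differently than the paper does, and the difference is worth noting. The paper encodes each summand $2^{t_i}$ of $V$ as the block $10^{t_i-1}1$ of length $t_i+1$; a direct computation of $\varphi$ along such a block gives a contribution of exactly $2^{t_i}$, so the Birkhoff sum over $I_{|w|}(w)$ equals $V$ on the nose and no correction is needed. You instead use the ``$\widehat T$-cylinder'' blocks $0^{k_i-1}1$ of length $k_i$, which is the natural shape coming from identity~\eqref{f1}; each contributes $2^{k_i}-1$, so the raw sum is $V-s$ rather than $V$, and you have to compensate by appending up to $s$ singleton blocks $1$ and doing a short case analysis ($V-W\ge s$ versus $V-W<s$). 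Both the value bound $W\le\sum\varphi(T^jx)\le W(1+2^{-n})$ and the length bound $|w|\le(n+2)(2+\log W)$ come out the same. The paper's encoding is slightly cleaner (no padding, no dichotomy), while yours stays aligned with the block structure used throughout Section~2 and correctly handles the edge case $k_s=0$ by dropping that term, which leaves $V-s$ unchanged. Both are valid proofs of the lemma.
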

\begin{proof}
Let $V$ be an integer given in Lemma \ref{lem:1}. Then 
$W \leq V \leq W(1+2^{-n}).$ Moreover if we write this number $V$ in binary expansion:
\[
V=2^{t_1}+\cdots+ 2^{t_p},
\]
one has that $ \lfloor \log W\rfloor+1\geq t_1>\cdots>t_p\ge \lfloor \log W\rfloor -n$ and $p\le n+2$.
Consider the word $$
w=(10^{t_1-1}1, 1 0^{t_2-1}1, \cdots, 10^{t_p-1}1)
$$ here the word $10^{t_p-1}1$ is $1$ when $t_p=0$.
 Then we can check that the length of $w$ satisfies $$
|w|=(t_1+1)+\cdots+(t_p+1)\le p(t_1+1)\le (n+2) (2+\log W),
$$  and for any $x\in I_{|w|}(w)$, $$
\sum_{j=0}^{|w|-1}\varphi(T^jx)=V.
$$
Hence, the proof is completed.
\end{proof}

We also need the following lemmas whose proofs are left for the reader.
\begin{lem}\label{lem:3}
For any $m\geq 1$, define $$
F_m=\big\{x\in (0,1]: \epsilon_{km}(x)=1, \ {\text{for all}}\ k\ge 1\big\}.
$$ Then $\dim_HF_m=\frac{m-1}{m}$.
\end{lem}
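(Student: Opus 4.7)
The plan is to exploit the self-similar structure of $F_m$. The defining condition requires $\epsilon_{km}=1$ for every $k\ge 1$ while leaving the digits at positions not divisible by $m$ completely free. Thus $F_m$ is naturally the attractor of a self-similar iterated function system of $2^{m-1}$ affine maps of contraction ratio $2^{-m}$, one for each free choice $(\epsilon_1,\dots,\epsilon_{m-1})\in\{0,1\}^{m-1}$ of the non-forced digits in a single block, appended to the forced $\epsilon_m=1$. These branches obey the open set condition, and the classical Moran formula $2^{m-1}\cdot(2^{-m})^s=1$ already suggests the answer $s=(m-1)/m$.

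Rather than quote the IFS theorem, I would prove the upper bound directly from the natural cylinder cover. The points of $F_m$ lie in exactly $2^{k(m-1)}$ dyadic cylinders of order $km$ (one for each free choice in the first $k$ blocks), and each cylinder has diameter $2^{-km}$. Hence for $s=(m-1)/m$,
\[
\mathcal{H}^{s}_{2^{-km}}(F_m)\;\le\;2^{k(m-1)}\cdot(2^{-km})^{s}\;=\;1,
\]
which, upon letting $k\to\infty$, gives $\dim_H F_m\le (m-1)/m$.

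For the lower bound I would construct the uniform Bernoulli-type measure $\mu$ on $F_m$ assigning mass $2^{-(m-1)}$ to each admissible choice of free digits in every block of length $m$. By construction every admissible cylinder $I_{km}(w)\subset F_m$ satisfies
\[
\mu(I_{km}(w))\;=\;2^{-k(m-1)}\;=\;\bigl|I_{km}(w)\bigr|^{(m-1)/m},
\]
and for an order not divisible by $m$ the same identity holds up to a multiplicative constant depending only on $m$. A standard comparison between an arbitrary interval of length $r$ and the dyadic cylinder of comparable length containing it then yields $\mu(B(x,r))\le C\,r^{(m-1)/m}$, and the mass distribution principle (Proposition~4.2 of \cite{Fa1}) gives $\dim_H F_m\ge (m-1)/m$.

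There is no real obstacle here: the only minor technical point is the passage from dyadic cylinders to arbitrary balls in the lower bound, which is handled by the elementary observation that a ball of radius $\sim 2^{-n}$ meets only boundedly many cylinders of order $n$, together with the fact that cylinders disjoint from $F_m$ carry no $\mu$-mass.
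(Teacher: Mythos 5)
Your proof is correct, and the paper in fact leaves the proof of this lemma to the reader, so there is nothing in the text to compare against. The structure you propose is the standard one for such homogeneous Cantor-type sets: the upper bound by a direct Hausdorff-content estimate on the natural cylinder cover (correctly giving $2^{k(m-1)}(2^{-km})^s=1$ at $s=(m-1)/m$), and the lower bound via the uniform Bernoulli measure on the free digits together with the mass distribution principle. Your handling of non-multiple-of-$m$ orders (mass controlled within a factor $2^{m-1}$ of $|I_n|^{(m-1)/m}$) and of the passage from dyadic cylinders to balls (a ball of radius $r$ meets only a bounded number of dyadic intervals of the comparable generation) is exactly what is needed to close the argument. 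The one cosmetic point worth a sentence in a written-up version is that the paper's binary expansion is the one ending in repeating $1$'s for dyadic rationals, so that $F_m$ differs from the self-similar attractor you invoke by at most a countable set; this does not affect Hausdorff dimension, but is the kind of thing a referee might ask about when you say $F_m$ ``is'' the attractor of the IFS.
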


\begin{lem}\cite[Lemma 4]{MaWen}\label{l1} 
Given a subset $\mathbb{J}$ of positive integers and an
infinite sequence $\{a_k\}_{k=1}^{\infty}$ of 0's and 1's, let
$$E(\mathbb{J}, \{a_k\}_{k=1}^{\infty})=\Big\{x\in (0,1]: \epsilon_k(x)=a_k, {\text{for all}}\ k\in \mathbb{J}\Big\}.$$
If the density of $\mathbb{J}$ is zero, that is,
$$\lim_{n\to \infty}\frac{1}{n}{\text{Card}}\Big\{k\le n: k\in \mathbb{J}\Big\}=0$$
then $\dim_H E(\mathbb{J}, \{a_k\}_{k=1}^{\infty})=1$.
\end{lem}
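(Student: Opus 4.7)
The upper bound $\dim_H E(\mathbb{J},\{a_k\}) \leq 1$ is automatic since the set lies in $(0,1]$, so the plan reduces to producing a matching lower bound. My approach is to place a canonical Bernoulli-type measure $\mu$ on $E(\mathbb{J},\{a_k\})$ and apply the mass distribution principle (Billingsley's lemma).

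I would construct $\mu$ by specifying its values on $T$-dyadic cylinders: for a word $w=(w_1,\ldots,w_n)\in\{0,1\}^n$, set
\[
\mu(I_n(w)) = \begin{cases} 2^{-(n - |\mathbb{J}\cap[1,n]|)} & \text{if } w_k = a_k \text{ for every } k \in \mathbb{J}\cap[1,n],\\ 0 & \text{otherwise.}\end{cases}
\]
Consistency across orders is immediate: a legal cylinder of order $n$ splits into two equally-weighted legal sub-cylinders when $n+1\notin\mathbb{J}$ and into a single legal sub-cylinder (keeping the full mass) when $n+1\in\mathbb{J}$. Thus $\mu$ extends to a Borel probability measure concentrated on $E(\mathbb{J},\{a_k\})$.

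Next, for every $x\in E(\mathbb{J},\{a_k\})$ the definition gives
\[
\frac{\log \mu(I_n(x))}{\log |I_n(x)|} = 1 - \frac{|\mathbb{J}\cap[1,n]|}{n} \longrightarrow 1
\]
as $n\to\infty$ by the density-zero hypothesis. To transfer this to balls, I would observe that any $B(x,r)$ with $2^{-(n+1)}<r\leq 2^{-n}$ meets at most three consecutive $T$-dyadic cylinders of order $n$, each of $\mu$-mass at most $2^{-(n-|\mathbb{J}\cap[1,n]|)}$, giving $\mu(B(x,r))\leq 3\cdot 2^{-(n-|\mathbb{J}\cap[1,n]|)}$; the same limit then governs $\liminf_{r\to 0}\log\mu(B(x,r))/\log r$.

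Billingsley's lemma (\cite[Proposition 4.9]{Fa1}) then yields $\dim_H E(\mathbb{J},\{a_k\})\geq 1$, completing the proof. There is no real obstacle in the argument: the density-zero assumption is precisely what makes the Bernoulli-on-the-free-coordinates measure absorb all the dimension, and the passage from cylinder ratios to ball ratios is standard since $T$-dyadic cylinders in $(0,1]$ are intervals of length exactly $2^{-n}$, hence comparable to balls of the same radius.
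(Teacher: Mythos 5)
Your proof is correct. The paper does not prove this lemma itself --- it cites \cite[Lemma~4]{MaWen} --- and your argument (a Bernoulli-type measure that is uniform on the unconstrained binary positions and deterministic on $\mathbb{J}$, the local-dimension computation $1 - |\mathbb{J}\cap[1,n]|/n \to 1$, the standard three-cylinder transfer from dyadic intervals to balls, and the mass distribution principle) is precisely the standard argument that the cited reference employs, so there is nothing to add.
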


Before the proof Theorem \ref{t2}, we show the following lemma.
\begin{lem}\label{l2}
Let $\Psi: \mathbb{N} \rightarrow \mathbb{N}$ be an increasing function such that $\Psi(n)/n\to \infty$ as $n\to\infty$. Assume that there exists a subsequence $N_k$ satisfying the following conditions
\begin{eqnarray}\label{f6}N_{k}-N_{k-1}\to \infty, \ \Psi(N_k)-\Psi(N_{k-1})\to \infty,\end{eqnarray} and \begin{eqnarray}\label{f6'}
\ \frac{\Psi({N_{k-1}})}{\Psi({N_{k}})}\to 1, \ \ \frac{\log \big(\Psi(N_{k})-\Psi(N_{k-1})\big)}{N_{k}-N_{k-1}}\to 0,
\end{eqnarray}
as $k\to \infty$. Then the set $$
E_\Psi (1)=\Big\{x\in (0,1]: \lim_{n\to \infty}\frac{1}{\Psi(n)}S_n(x)=1\Big\}
$$
has Hausdorff dimension $1$.
\end{lem}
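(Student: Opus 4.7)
My plan is to construct a Cantor-like subset $\mathcal{C}\subseteq E_\Psi(1)$ of Hausdorff dimension one, by prescribing digits of $x\in\mathcal{C}$ at a set $\mathbb{J}$ of positive integers of asymptotic density zero and then invoking Lemma~\ref{l1}. The prescribed digits play two roles: engineered blocks $w_k$ inject the correct Birkhoff mass on each scale $[N_{k-1},N_k]$, while a sparse grid of anchor ones keeps the Birkhoff contribution of the remaining (free) digits under control.

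Set $W_k:=\Psi(N_k)-\Psi(N_{k-1})$. By (\ref{f6'}), $\log W_k/(N_k-N_{k-1})\to 0$; choose $n_k\to\infty$ slowly enough that $M_k:=(n_k+2)(2+\log W_k)=o(N_k-N_{k-1})$. Lemma~\ref{lem:2} then produces a word $w_k$ of length $\le M_k$ such that, once the digits of $x$ at positions $N_{k-1}+1,\ldots,N_{k-1}+|w_k|$ agree with $w_k$, the partial sum $\sum_{j=N_{k-1}}^{N_{k-1}+|w_k|-1}\varphi(T^j x)$ equals a fixed integer $V_k\in[W_k,W_k(1+2^{-n_k})]$, regardless of the other digits of $x$. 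Next pick $L_k\to\infty$ slowly enough that $2^{L_k}=o(\Psi(N_k)/N_k)$; this is feasible because $\Psi(n)/n\to\infty$. In the filler $[N_{k-1}+|w_k|+1,N_k]$ of length $\ell_k$, prescribe the digit $1$ at every $L_k$-th position; leave the other digits free. Let $\mathbb{J}$ collect all prescribed positions together with their prescribed values, and let $\mathcal{C}$ be the corresponding set.

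\emph{Dimension.} Up to $N_k$, the number of prescribed positions is at most $\sum_{k'\le k}(|w_{k'}|+\ell_{k'}/L_{k'})$; since both $|w_{k'}|/(N_{k'}-N_{k'-1})\to 0$ and $1/L_{k'}\to 0$, a Ces\`aro-type argument gives $|\mathbb{J}\cap[1,N_k]|=o(N_k)$, so $\mathbb{J}$ has asymptotic density zero and Lemma~\ref{l1} yields $\dim_H\mathcal{C}=1$. \emph{Membership.} Decompose $S_{N_k}(x)=\sum_{k'\le k}V_{k'}+R_k(x)$, where $R_k(x)$ collects the Birkhoff contributions of the filler indices. The main sum satisfies $\Psi(N_k)-\Psi(N_0)\le\sum V_{k'}\le\Psi(N_k)\bigl(1+\max_{k'\le k}2^{-n_{k'}}\bigr)=\Psi(N_k)(1+o(1))$. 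For $R_k(x)$, the anchors guarantee that the next one after any filler index $j$ lies within $L_{k'}+1$ positions, so $\varphi(T^j x)\le 2^{L_{k'}+1}$; summing gives $R_k(x)=O(N_k\cdot 2^{L_k})=o(\Psi(N_k))$. Hence $S_{N_k}(x)/\Psi(N_k)\to 1$, and for $n\in[N_{k-1},N_k]$ the monotonicity of $S_n$ and $\Psi$, combined with $\Psi(N_{k-1})/\Psi(N_k)\to 1$, squeezes $S_n(x)/\Psi(n)\to 1$.

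The main obstacle is the triple balance the construction requires: the blocks $w_k$ must be short (so that $\mathbb{J}$ stays sparse), the anchor spacing $L_k$ must diverge (again for sparsity), yet $L_k$ cannot grow so fast that a worst-case filler configuration could inject mass comparable to $\Psi(N_k)$. The hypotheses (\ref{f6}), (\ref{f6'}), and $\Psi(n)/n\to\infty$ in the statement of the lemma are tailored precisely so that all three requirements are simultaneously realizable.
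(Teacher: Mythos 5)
Your route is correct in spirit but genuinely different from the paper's. The paper builds the Cantor subset so that the prescribed digits in block $k$ (the $1^{\ell_k}$ and $w_k$) sit at the \emph{end} of $(N_{k-1},N_k]$ and the ``free'' digits are constrained to lie in $\mathcal{U}=\{u\in\{0,1\}^m:u_m=1\}$ with $m$ fixed; it then constructs a $(1-\eta)$-H\"older map onto $F_m$ (Lemma~\ref{lem:3}) and lets $m\to\infty$. You instead let the anchor spacing $L_k$ diverge and appeal directly to the Ma--Wen--Wen criterion (Lemma~\ref{l1}): density zero of the prescribed set $\mathbb{J}$ gives full dimension in one step, with no H\"older computation. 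That is a cleaner mechanism, but two points in your write-up need repair.

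\textbf{The density argument is incomplete.} You only check $|\mathbb{J}\cap[1,N_k]|=o(N_k)$, but Lemma~\ref{l1} needs $|\mathbb{J}\cap[1,n]|=o(n)$ for \emph{all} $n$. Since you place the contiguous block $w_k$ at the \emph{start} of block $k$, the density at $n=N_{k-1}+|w_k|$ is at least $|w_k|/(N_{k-1}+|w_k|)$, and the bound you arrange, $|w_k|=o(N_k-N_{k-1})$, does not imply $|w_k|=o(N_{k-1})$: conditions (\ref{f6})--(\ref{f6'}) permit $N_k-N_{k-1}\gg N_{k-1}$. One can rescue your placement (the hypotheses do force $\log W_k=o(k)$ via Stolz--Ces\`aro applied to $\log\Psi(N_k)$, while $N_{k-1}/k\to\infty$ since $N_j-N_{j-1}\to\infty$, so $\log W_k=o(N_{k-1})$ and one may additionally demand $n_k\log W_k=o(N_{k-1})$), but this extra constraint on $n_k$ must be stated. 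The cleaner fix, and what the paper does, is to place $w_k$ at the \emph{end} of block $k$: then $|\mathbb{J}\cap[1,n]|/n$ is monotone across the $w_k$-segment and is controlled by its value at $n=N_k$.

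\textbf{The upper bound on the engineered sum is mis-stated.} You write $\sum_{k'\le k}V_{k'}\le\Psi(N_k)\bigl(1+\max_{k'\le k}2^{-n_{k'}}\bigr)=\Psi(N_k)(1+o(1))$, but since the $n_{k'}$ increase, the maximum is $2^{-n_1}$, a fixed constant. The correct statement is the weighted bound $\sum_{k'\le k}W_{k'}2^{-n_{k'}}=o(\Psi(N_k))$, proved by splitting off a finite head with $2^{-n_{k'}}<\varepsilon$ for $k'>k_0$ and letting $\Psi(N_k)\to\infty$ absorb the rest. The conclusion is of course still $\sum V_{k'}=\Psi(N_k)(1+o(1))$.
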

\noindent $Proof.$ Fix a large integer $m$ and write $$
\mathcal{U}=\Big\{u=(\epsilon_1,\cdots,\epsilon_m): \epsilon_m=1, \epsilon_i\in \{0,1\}, i\ne m\Big\}.
$$To avoid the abuse of notation, by the first assumption of (\ref{f6}), we assume $N_{k}-N_{k-1}\gg m$ for all $k\ge 1$ by setting $N_0=0$ and $\Psi(N_0)=0$.

For each $k\ge 1$,  we write $$W_k:=\Psi(N_k)-\Psi(N_{k-1})$$
and let $\{n_k\}$ be a sequence of integers tending to $\infty$ such that
\[ n_k\le \log W_k,\ \
n_k \cdot \frac{\log \big(\Psi(N_{k})-\Psi(N_{k-1})\big)}{N_{k}-N_{k-1}}\to 0.
\]
By the second assumptions of (\ref{f6}) and (\ref{f6'}), this sequence of $n_k\geq 0$ do exist.

Now for $W_k$ and $n_k$, let $w_k$ be the word given in Lemma \ref{lem:2}. Then 
the length $a_k$ of $w_k$ satisfies
\begin{align}\label{f5}
\begin{split}
a_k\le& (n_k+2)(2+\log W_k)\\
=&(n_k+2)\left(2+\log(\Psi(N_k)-\Psi(N_{k-1}))\right)=o(N_{k}-N_{k-1})
\end{split}
\end{align}
and for any $x\in I_{a_k}(w_k)$,
\begin{eqnarray}\label{f4}
W_k \leq \sum_{j=0}^{a_k-1}\varphi(T^jx) \leq W_k (1+2^{-n_k}).
\end{eqnarray}

Define $t_k, \ell_k$ to be the integers satisfying $$
N_k-N_{k-1}-a_{k}=t_km+\ell_k,
$$ for some $0\le \ell_k<m$.

Let $w_k\ (k\geq 1)$ be given as the above. We define a Cantor subset of $E_{\Psi}(1)$ as follows.

\medskip

{\it Level 1 of the Cantor subset.}  Define
\begin{eqnarray*}
E_1=\Big\{I_{N_1}(u_1,\cdots,u_{t_1}, 1^{\ell_1}, w_1): u_i\in \mathcal{U}, 1\le i\le t_1\Big\}.
\end{eqnarray*} 

For simplicity, we use $I_{N_1}(U_1)$ to denote a general cylinder in $E_1$.

\medskip
{\it Level 2 of the Cantor subset.}
 This level is composed by sublevels for each cylinder $I_{N_1}(U_1)\in E_1$. Fix an element $I_{N_1}=I_{N_1}(U_1)\in E_1$. Define
\begin{eqnarray*}
E_2(I_{N_1}(U_1))=\Big\{I_{N_2}(U_1,u_1,\cdots,u_{t_2}, 1^{\ell_2}, w_2): u_i\in \mathcal{U}, 1\le i\le t_2\Big\}.
\end{eqnarray*}
Then $$
E_2=\bigcup_{I_{N_1}\in E_1}E_2(I_{N_1}).
$$
For simplicity, we use $I_{N_2}(U_2)$ to denote a general cylinder in $E_2$.
\medskip

{\it From Level $k$ to $k+1$.}
 Fix $I_{N_k}(U_k)\in E_k$. Define
\begin{eqnarray*}
E_{k+1}(I_{N_k}(U_k))=\Big\{I_{N_{k+1}}(U_k,u_1,\cdots,u_{t_{{k+1}}}, 1^{\ell_{k+1}}, w_{k+1}): u_i\in \mathcal{U}, 1\le i\le t_{k+1}\Big\}.
\end{eqnarray*}
Then $$
E_{k+1}=\bigcup_{I_{N_k}\in E_k}E_{k+1}(I_{N_k}).
$$
%

Up to now we have constructed a sequence of nested sets $\{E_k\}_{k\ge 1}$. Set $$
F=\bigcap_{k\ge 1}E_k.
$$
We claim that $$ F\subset E(\Psi).
$$
In fact, for all $x\in F$, by construction, for each $k\ge 1$, \begin{align*}
&\sum_{n=N_{k-1}}^{N_{k}-1}\varphi(T^nx)\\=&\sum_{n=N_{k-1}}^{N_{k-1}+t_{k}m-1}\varphi(T^nx)+\sum_{n=N_{k-1}+t_{k}m}^{N_{k-1}+t_{k}m+\ell_{k}-1}\varphi(T^nx)+\sum_{n=N_{k-1}+t_{k}m+\ell_{k}}^{N_{k}-1}\varphi(T^nx)\\
=&t_{k}O(2^m)+\ell_{k}+W_{k}(1+O(2^{-n_{k}}))\\=&O\left(\frac{(N_{k}-N_{k-1})2^m}{m}\right)+(\Psi(N_{k})-\Psi(N_{k-1}))(1+O(2^{-n_{k}})).
\end{align*}
Since $n_k\to\infty$ which implies $2^{-n_{k}}\to0$ as $k\to\infty$, we have
\begin{align*}
\sum_{n=0}^{N_{k}-1}\varphi(T^nx)&=\Psi(N_{k}) \big(1+o(1)\big)+O\left(\frac{N_{k}2^m}{m}\right).
\end{align*}
By the assumption $\Psi(n)/n\to \infty$ as $n\to \infty$, we then deduce
\begin{align*}
\sum_{n=0}^{N_{k}-1}\varphi(T^nx)=\Psi(N_{k})+o(\Psi(N_{k})),
\end{align*}
Thus
 \begin{eqnarray}\label{N-k-limit}
\lim_{k\to \infty}{\sum_{n=0}^{N_{k}-1}\varphi(T^nx) \over \Psi(N_{k})}=1.
\end{eqnarray}
While, for each $N_{k-1}<N\le N_{k}$
\begin{eqnarray*}
\frac{\sum_{n=0}^{N_{k-1}-1}\varphi(T^nx)}{\Psi(N_{k})}\le \frac{\sum_{n=0}^{N-1}\varphi(T^nx)}{\Psi(N)}\le \frac{\sum_{n=0}^{N_{k}-1}\varphi(T^nx)}{\Psi(N_{k-1})}.
\end{eqnarray*}
So by the first assumption of (\ref{f6'}), we deduce from (\ref{N-k-limit}) that
$$
\lim_{n\to\infty}\frac{1}{\Psi(n)}S_n(x)=1.
$$
This proves $x\in E_\Psi(1)$ and hence $F\subset E_\Psi(1)$.

\bigskip
In the following, we will construct a H\"{o}lder continuous function from $F$ to $F_m$.
Recall that
$$
F_m=\big\{x\in (0,1]: \epsilon_{km}(x)=1, \ {\text{for all}}\ k\ge 1\big\}.
$$
Define \begin{eqnarray*}
f:F&\to& F_m\\
x&\mapsto & y
\end{eqnarray*}where $y$ is obtained by eliminating the digits $\{(\epsilon_{N_k-\ell_k-a_k+1}, \cdots,\epsilon_{N_k})\}_{k\ge 1}$ in the binary expansion of $x$. Now we calculate the H\"{o}lder expoent of $f$.

Take two points $x_1, x_2\in F$ closed enough. Let $n$ be the smallest integer such that $\epsilon_{n}(x_1)\neq \epsilon_n(x_2)$ and $k$ be the integer such that $N_k<n\le N_{k+1}$. Note that by the construction of $F$, the digits sequence $$\{(\epsilon_{N_k-\ell_k-a_k+1}, \cdots,\epsilon_{N_k})\}_{k\ge 1}\ {\text{and}}\ \{\epsilon_{N_k+tm}\}_{1\le t\le t_{k+1}}$$ are the same for all $x\in F$. So we must have
\begin{eqnarray}\label{f7}N_k<n< N_{k+1}-\ell_{k+1}-a_{k+1}.\end{eqnarray} Since  $n$ is strictly less than $N_{k+1}-\ell_{k+1}-a_{k+1}$ and $\epsilon_{N_k+tm}(x_1)=\epsilon_{N_k+tm}(x_2)=1$ for all $1\le t\le  t_{k+1}$, thus, at most $m$ steps after the position $n$, saying $n'$, $\epsilon_{n'}(x_1)=\epsilon_{n'}(x_2)=1$. So it follows that
$$
|x_1-x_2|\ge \frac{1}{2^{n+m}}.
$$


Again by the construction and the definition of the map $f$, we have $y_1=f(x_1)$ and $y_2=f(x_2)$ have common digits up to the position $n-1-(\ell_1+a_1)-\cdots-(\ell_k+a_k)$. Thus, it follows
$$
|f(x_1)-f(x_2)|\le \frac{1}{2^{n-1-(\ell_1+a_1)-\cdots-(\ell_k+a_k)}}.
$$
Recall that $\ell_k<m$ and $a_1+\cdots+a_k=o(N_k)$ (see (\ref{f5})) and also that $N_k/k\to \infty$ as $k\to \infty$ (by (\ref{f6})). We have
$$
1\ge \frac{n-1-(\ell_1+a_1)-\cdots-(\ell_k+a_k)}{n+m}\ge \frac{n-1-km-o(N_k)}{n+m}=1+o(1),
$$
which implies that $f$ is $(1-\eta)$-H\"{o}lder for any $\eta>0$. Thus
$$
\dim_HF\ge (1-\eta)\dim_HF_m .
$$
By Lemma \ref{lem:3}, we then have
$$
\dim_HF\ge (1-\eta)\frac{ m-1}{ m}.
$$
By the arbitrariness of $\eta>0$ and letting $m\to \infty$, we conclude that $\dim_HE(\Psi)=1$. This finishes the proof.\hfill $\Box$

\begin{proof}[Proof  of  Theorem~\ref{t2}]
In all the three parts of Theorem \ref{t2}, the case of $\beta= 0$ is a direct consequence of Theorem~\ref{main-1}.

(I). Assume that $\Psi$ is one of the functions $\Psi(n)=n\log n$, $\Psi(n)=n^a\ (a>1)$, $\Psi(n)=2^{n^\gamma}$ with $0<\gamma<1/2$.

(I$_1$). $0<\beta<\infty$. It suffices to consider the dimension of $E_\Psi(1)$ i.e. $\beta=1$, since for other $\beta\in (0,\infty)$,
 we need only replace $\Psi(n)$ by $\beta\Psi(n)$.
 
%

To show $\dim_H E_\Psi(1)=1$, we can apply Lemma \ref{l2} directly.
If $\Psi(n)=n\log n$, we can choose $N_k=k^2$. For $\Psi(n)=n^a\ (a>1)$, we can also choose $N_k=k^2$.
Suppose now $\Psi(n)=2^{n^\gamma}$ with $0<\gamma<1/2$. Let $\delta>0$ be small such that \begin{equation}\label{ff2}
\frac{\gamma}{1-\gamma}+\delta \gamma<1\end{equation} which is possible since $\gamma<1/2$.
Take
\begin{equation}\label{cond1-OK}
N_k=\lfloor k^{{1\over 1-\gamma}+\delta}\rfloor.
\end{equation}
Then we have
\begin{equation}\label{cond2-OK}
N_{k+1}-N_k \approx k^{{\gamma \over 1-\gamma}+\delta},
\end{equation}
and
\begin{align*}
\log(\Psi(N_{k+1})-\Psi(N_{k})) &\approx \log(\Psi'(N_k) (N_{k+1}-N_k)) \\  &\approx N_k^{\gamma} + \log(N_{k+1}-N_k)\approx N_k^\gamma.
\end{align*}
Here we write $A \approx B$ when $A/B \to 1$.
This shows the validity of \eqref{f6}. Moreover,
\[
{\log(\Psi(N_{k+1})-\Psi(N_{k})) \over N_{k+1}-N_k} \approx {k^{{\gamma \over 1-\gamma}+\gamma\delta} \over k^{{\gamma \over 1-\gamma}+\delta}}=k^{-\delta(1-\gamma)} \to 0 \ (k\to \infty).
\]
Thus the second assumption of (\ref{f6'}) is satisfied. At last, for the first assumption in (\ref{f6'}), by (\ref{ff2})$$
\frac{\Psi(N_{k-1})}{\Psi(N_k)}=2^{(k-1)^{\frac{\gamma}{1-\gamma}+\delta \gamma}-k^{\frac{\gamma}{1-\gamma}+\delta \gamma}}\to 1.
$$
Hence Lemma \ref{l2} applies.

\medskip

(I$_2$). If $\beta = \infty$, 
we may choose 
$\widetilde{\Psi}(n)=2^{n^{\eta}}$ for some $0 < \eta < \frac 12$ such that
$E_{\widetilde{\Psi}} (1) \subset E_\Psi (\infty)$. Then $\dim_HE_\Psi (\infty)=1$ follows from (I$_1$).


\medskip

(II). Now suppose that $\Psi(n)=2^{n^{\gamma}}$ with $1/2 \leq \gamma < 1$.

(II$_1$). Let $\beta \in (0, \infty)$.
We will prove that $E_\Psi({\beta})$ is empty. On the contrary,
suppose there is $x \in E_\Psi({\beta})$, which has binary expansion
\begin{equation}\label{3.3}x=[0^{n_1-1} 10^{n_2-1} 1 \cdots 0^{n_{\ell}-1} 1\cdots].\end{equation}
 Then, by \eqref{f1} we have
\begin{equation}\label{3.1}
\begin{split}
\frac{S_{n_1 + n_2 +\dots + n_{\ell}}(x)}{\Psi(n_1 + n_2 +\dots + n_{\ell})} &= \frac{2^{n_1} + 2^{n_2} + \dots + 2^{n_{\ell}} - {\ell}}{2^{(n_1 + n_2 + \dots + n_{\ell})^\gamma}} \to \beta, \\
\frac{S_{n_1 + n_2 +\dots + n_{\ell}+1}(x)}{\Psi(n_1 + n_2 +\dots + n_{\ell}+1)} &= \frac{2^{n_1} + 2^{n_2} + \dots + 2^{n_{\ell}} -{\ell} + 2^{n_{{\ell}+1}-1}}{2^{(n_1 + n_2 + \dots + n_{\ell} +1)^\gamma}} \to \beta.
\end{split}\end{equation}
Since
$$\frac{2^{(n_1 + n_2 + \dots + n_{\ell})^\gamma}}{2^{(n_1 + n_2 + \dots + n_{\ell}+1)^\gamma}} \to 1,$$
by dividing the two limits of \eqref{3.1}, we deduce that
\begin{equation*}
\frac{2^{n_1} + 2^{n_2} + \dots + 2^{n_{\ell}} -{\ell} + 2^{n_{{\ell}+1}-1}}{2^{n_1} + 2^{n_2} + \dots + 2^{n_{\ell}} - {\ell} }  = 1 +\frac{2^{n_{{\ell}+1}-1}}{2^{n_1} + 2^{n_2} + \dots + 2^{n_{\ell}}-{\ell}}  \to 1,
\end{equation*}
which implies that
\begin{equation*}\label{SnSn1}
\frac{S_{n_1 + n_2 +\dots + n_{{\ell}+1}}(x)}{{S_{n_1 + n_2 +\dots + n_{\ell}}(x)}}=1+\frac{2^{n_{{\ell}+1}}-1}{2^{n_1} + 2^{n_2} + \dots + 2^{n_{\ell}}-{\ell}}  \to 1.
\end{equation*}
Combining with \eqref{3.1}, we get
\begin{equation*}
1\leftarrow \frac{\Psi(n_1+\cdots+n_{{\ell}+1})}{\Psi(n_1+\cdots+n_{\ell})}=\frac{2^{(n_1 + n_2 + \dots + n_{\ell} + n_{{\ell} +1})^\gamma}}{2^{(n_1 + n_2 + \dots + n_{\ell})^\gamma}}.
\end{equation*}
Thus
\begin{align*}
&(n_1 + n_2 + \dots + n_{\ell}+ n_{{\ell} +1})^\gamma - (n_1 + n_2 + \dots + n_{\ell})^\gamma   \\
&= (n_1 + n_2 + \dots + n_{\ell})^\gamma \left( \Big( 1 + \frac{n_{{\ell} +1}}{n_1 + n_2 + \dots + n_{\ell}} \Big)^\gamma - 1 \right)  \\
&\approx \frac{\gamma  n_{{\ell} +1}}{(n_1 + n_2 + \dots + n_{\ell})^{1-\gamma}}  \to 0.
\end{align*}
Therefore, for any $\varepsilon >0$, there exists $k_0\geq 1$ such that for all $j>k_0$,
\begin{align}\label{ineq:nj}
n_{j} < \varepsilon (n_1 + n_2 + \dots + n_{j-1})^{1-\gamma}.
\end{align}
Then for any $k_0<j\leq {\ell}$
$$n_{j} < \varepsilon (n_1 + n_2 + \dots + n_{{\ell}})^{1-\gamma}.$$
This implies
\begin{align*}
S_{n_1 + n_2 +\dots + n_{\ell}}(x) &= 2^{n_1} + 2^{n_2} + \dots + 2^{n_{\ell}} - {\ell} \\
&\leq  M+{\ell} 2^{\epsilon (n_1 + n_2 + \dots + n_{{\ell}})^{1-\gamma}}-{\ell},
\end{align*}
with $M:=2^{n_1}+\cdots +2^{n_{k_0}}$.
Thus we have
\begin{equation}\label{form:S}
\frac{S_{n_1 + n_2 +\dots + n_{\ell}}(x)}{\Psi(n_1 + n_2 +\dots + n_{\ell})}
< \frac{M+{\ell} 2^{\epsilon (n_1 + n_2 + \dots + n_{{\ell}})^{1-\gamma}}-{\ell}}{2^{(n_1 + n_2 + \dots + n_{\ell})^\gamma}}.
\end{equation}
By observing $n_j\geq 1$, we deduce that the upper bound of (\ref{form:S}) converges to 0 for $1/2 \leq \gamma < 1$, a contradiction to \eqref{3.1}.
Hence $ E_\Psi({\beta})$ is an empty set.

\medskip


(II$_2$). $\beta=\infty$.
 Fix $\delta \in (\gamma, 1)$ and take a large integer $K$ such that $2^{K\delta}>1$. Consider the set of points such that at every position $2^k, k>K$ in their binary expansions, they have a string of zeros of length $2^{\delta k}$, i.e. $$
 E:=\Big\{x\in (0,1]: \epsilon_{2^k+1}=\cdots=\epsilon_{2^k+\lfloor 2^{k\delta}\rfloor}=0,\ {\text{for all}}\ k\ge K\Big\}.
 $$
  On the one hand, $E\subset E_{\psi}(\infty)$, since for any $n\in (2^k,2^{k+1}]$ for some $k\ge K$, \[
S_n(x)> 2^{2^{k\delta}}\ge 2^{(n/2)^\delta} \gg 2^{n^\gamma}.
\]
On the other hand, the set $E$ has dimension 1 guaranteed by Lemma \ref{l1}.
\medskip

(III). Suppose that $\Psi(n)=2^{n^{\gamma}}$ with $\gamma \ge 1$ and let $\beta\in (0,+\infty]$. Assume that there exists $x\in E_{\Psi}(\beta)$ for some $\beta\in (0,+\infty)$. Write the binary expansion of $x$ as (\ref{3.3}). 
Then by  \eqref{f1}, 
\begin{equation}\label{3.2}
\begin{split}
\frac{S_{n_1 + n_2 +\dots + n_{\ell}}(x)}{\Psi(n_1 + n_2 +\dots + n_{\ell})} &= \frac{2^{n_1} + 2^{n_2} + \dots + 2^{n_{\ell}}-\ell}{2^{(n_1 + n_2 + \dots + n_{\ell})^\gamma}} \to \beta, \\
\frac{S_{n_1 + n_2 +\dots + n_{\ell}- 1}(x)}{\Psi(n_1 + n_2 +\dots + n_{\ell}-1)} &= \frac{2^{n_1} + 2^{n_2} + \dots + 2^{n_{\ell}} -{\ell} -1 }{2^{(n_1 + n_2 + \dots + n_{\ell} -1)^\gamma}} \to \beta.
\end{split}\end{equation}
However,
$$
 \frac{2^{n_1} + 2^{n_2} + \dots + 2^{n_{\ell}} - {\ell}}{2^{n_1} + 2^{n_2} + \dots + 2^{n_{\ell}} - {\ell} -1 } \to 1 \ \text{ but } \ \frac{2^{(n_1 + n_2 + \dots + n_{\ell})^\gamma}}{2^{(n_1 + n_2 + \dots + n_{\ell} -1)^\gamma}} \ge 2,$$
which is a contradiction.  Hence $E_\Psi({\beta})$ is empty when $\beta\in (0,+\infty)$.

When $\beta=+\infty$, by (\ref{f1}), we have
\begin{equation}\label{3.15}
\liminf_{n\to\infty}\frac{S_n(x)}{2^n}\le 1.
\end{equation}
So,
\begin{equation*}
 \liminf_{n\to\infty}\frac{S_n(x)}{\Psi(n)}\le 1.
\end{equation*}
 This shows that $E_{\Psi}(\infty)$ is also empty.
\end{proof}

\bigskip
\section{The potential $1/x$}\label{sec:1overx}

In fact, the techniques in Section \ref{sec:fast} can be applied to the continuous potential $g: x\mapsto 1/x$ on $(0,1]$ which has a singularity at $0$.


\begin{proof}[Proof of Theorem \ref{t3}]
We first show that if $\Psi(n)$ is
one of the following
\[
\Psi(n)=n\log n, \ \Psi(n)=n^a \ (a>1), \ \Psi(n)=2^{n^{\gamma}} \ (0<\gamma<1/2),
\]
then for any $\beta  \in [0,\infty]$, $\dim_HF_\Psi({\beta})=1$.

We note that if $x\in (0,1]$ has binary expansion $x=[0^n1^s\dots]$, then $\varphi(x)=2^n$ and
\begin{align}\label{val:g}
2^n\leq g(x) \leq 2^n+2^{n-s+1}=2^n(1+2^{-s+1}).
\end{align}

In Lemma \ref{lem:2}, for an integer $W$, and for any integer $n\leq \log W$, we can construct instead of the words  $
w=(10^{t_1-1}1, 1 0^{t_2-1}1, \cdots, 10^{t_p-1}1)
$, the following word
$$
w=(10^{t_1-1}1^{s+1}, 1 0^{t_2-1}1^{s+1}, \cdots, 10^{t_p-1}1^{s+1}).
$$
Then the length of the word satisfies
\begin{align}\label{length:word}
|w|=\sum_{i=1}^p(t_i+s+1)\le p(t_1+s+1)\le (n+2) (\log W+s+2).
\end{align}
By (\ref{val:g}), for any $x\in I_{|w|}(w)$,
$$
W+s (n+2)\leq \sum_{j=0}^{|w|-1}g(T^jx) \leq  W(1+2^{-n})\cdot (1+2^{-s})+2s(n+2).
$$

For each $k\ge 1$,  we still write $$W_k:=\Psi(N_k)-\Psi(N_{k-1})$$
and let $n_k$, $s_k$ be a sequence of integers tending to $\infty$ such that
\begin{align}\label{cond-nksk}
n_k \cdot \frac{\log \big(\Psi(N_{k})-\Psi(N_{k-1})\big)}{N_{k}-N_{k-1}}\to 0,
\end{align}
\begin{align}\label{cond-nksk-2}
{n_k \cdot s_k \over N_{k}-N_{k-1}}\to 0,
\end{align}
and
\begin{align}\label{cond-nksk-3}
{n_k \cdot s_k \over \Psi(N_{k})-\Psi(N_{k-1})}\to 0.
\end{align}
By (\ref{f6}) and (\ref{f6'}), these two sequences of $n_k\geq 0$, $s_k\geq 0$ do exist.

Now for $W_k$ and $n_k$, $s_k$, let $w_k$ be the word given as above. Then 
by (\ref{cond-nksk}) and (\ref{cond-nksk-2}), the length $a_k$ of $w_k$ satisfies
\begin{equation}\label{f5-bis}
\begin{split}
a_k\le& (n_k+2)(\log W_k+s_k+2)\\
=&(n_k+2)\left(\log(\Psi(N_k)-\Psi(N_{k-1}))+s_k+2\right) \\
=&o(N_{k}-N_{k-1})
\end{split}
\end{equation}
and for any $x\in I_{a_k}(w_k)$,
\begin{equation}\label{f4-bis}
\begin{split}
W_k+s_k (n_k+2) &\leq \sum_{j=0}^{a_k-1}g(T^jx) \\ &\leq W_k (1+2^{-n_k})\cdot (1+2^{-s_k})+2s_k (n_k+2).
\end{split}\end{equation}
Hence by (\ref{cond-nksk-3}) we still have the same estimation:
\begin{align*}
\sum_{n=0}^{N_{k}-1}g(T^nx)=\Psi(N_{k})+o(\Psi(N_{k}))
\end{align*}
and the rest of the proof is the same as (I) of the proof of Theorem \ref{t2}.

\medskip
We can repeat the same arguments in Section \ref{sec:fast} and show that for potential $g$, the set $F_\Psi(\beta)$ is empty  if
$\beta\in (0, \infty), \Psi(n)=2^{n^\gamma} ( 1/2 \leq \gamma <1)$ or $\beta\in (0, \infty], \Psi(n)=2^{n^\gamma} (\gamma \geq 1)$.

In fact,  by definition, if there exists $x \in F_\Psi({\beta})$, with its binary expansion
$$x=[0^{n_1-1} 10^{n_2-1} 1 \cdots 0^{n_{\ell}-1} 1 0^{n_{\ell+1}-1} 1\cdots],$$
then
\begin{equation*}
\begin{split}
\frac{S_{n_1 + n_2 +\dots + n_{\ell}}g(x)}{\Psi(n_1 + n_2 +\dots + n_{\ell})}  \to \beta, \quad
\frac{S_{n_1 + n_2 +\dots + n_{\ell}+1}g(x)}{\Psi(n_1 + n_2 +\dots + n_{\ell}+1)} \to \beta.
\end{split}\end{equation*}
Thus
\begin{equation*}
\begin{split}
\frac{S_{n_1 + n_2 +\dots + n_{\ell}}g(x)}{S_{n_1 + n_2 +\dots + n_{\ell}+1}g(x)} \to 1.
\end{split}\end{equation*}
Observing $\varphi\leq g \leq 2\varphi$, we have
\begin{equation*}
\begin{split}
\frac{2^{n_{{\ell}+1}}}{S_{n_1 + n_2 +\dots + n_{\ell}}g(x)} \to 0,
\end{split}\end{equation*}
which then implies
\begin{equation*}
\frac{S_{n_1 + n_2 +\dots + n_{\ell}}g(x)}{S_{n_1 + n_2 +\dots + n_{\ell+1}}g(x)} \to 1.
\end{equation*}
By the definition of $x\in F_\Psi(\beta)$, we have
\begin{equation*}
\begin{split}
\frac{\Psi({n_1 + n_2 +\dots + n_{{\ell}+1}})}{\Psi({n_1 + n_2 +\dots + n_{\ell}})} \to 1.
\end{split}\end{equation*}
This further implies the same inequality with \eqref{ineq:nj} and the rest of proof is the same as (II$_1$) and (III) by noting $\varphi\leq g \leq 2\varphi$.

\medskip
For $\Psi(n)=2^{n^\gamma} ( 1/2 \leq \gamma <1)$, we can also prove $\dim_HF_\Psi(\infty)=1$ by the same proof as (II$_2$).
\end{proof}

%
%


\begin{thebibliography}{99}

\bibitem{Aa} J. Aaronson, {An Introduction to Infinite Ergodic Theory}, Mathematical Surveys and Monographs, 50. American Mathematical Society, Providence, RI, 1997. xii+284 pp.

\bibitem{BS01} L. Barreira, B. Saussol, {\it Variational principles and mixed multifractal spectra}, Trans. Amer. Math. Soc. 353 (2001), no. 10, 3919--3944.

\bibitem{Fa1} K. J. Falconer, Fractal Geometry, Mathematical Foundations
and Application, Wiley, 1990.

\bibitem{FFW01} A.-H. Fan, D.-J. Feng, and J. Wu, {\it Recurrence, dimension and entropy}, J. London Math. Soc. (2) 64 (2001), no. 1, 229--244,

\bibitem{Fe46} W. Feller,  {\it A Limit Theoerm for Random Variables with Infinite Moments}, Amer. J. Math.,  68 (1946), 257--262.,

\bibitem{Fe68} W. Feller, {An Introduction to Probability Theory and its Applications}, Vol. 1, 3rd Edition, John Wiley, 1968.

\bibitem{FLWW09} A.-H. Fan, L. Liao, B.-W. Wang, J. Wu, {\it On Khintchine exponents
and Lyapunov exponents of continued fractions}, Ergodic Theory Dynam. Systems 29
(2009), no. 1, 73--109.

\bibitem{FJLR} A.-H. Fan, T. Jordan, L. Liao, M. Rams, {\it Multifractal analysis for expanding interval maps with infinitely many branches}, Trans. Amer. Math. Soc. 367 (2015), 1847--1870.

\bibitem{FLWW} A.-H. Fan, L. Liao, B.-W. Wang, J. Wu, {\it On the fast Khintchine spectrum in continued fractions},
Monats. Math., 171 (2013), 329--340.

\bibitem{HMU} P. Hanus, R. D. Mauldin and M. Urba\'nski, {\it Thermodynamic formalism and multifractal analysis of
conformal infinite iterated function systems}, Acta. Math. Hungar. 96 (2002), no. 1--2, 27--98.

\bibitem{LR1} L. Liao, M. Rams, {\it Subexponentially increasing sum of partial quotients in continued fraction expansions},
Math. Proc. Camb. Phil. Soc., 160 (2016), 401--412.

\bibitem{LR2} L. Liao, M. Rams, {\it  Upper and lower fast Khintchine spectra in continued fractions}, Monats. Math., 180 (2016), 65--81.

\bibitem{MaWen} J. Ma, S. Wen, Z. Wen, {\it Egoroff's theorem and maximal run length,} Monatsh. Math. 151 (2007), no. 4, 287--292.

%
\bibitem{Oli98} E. Olivier, {\it Analyse multifractale de fonctions continues}, C. R. Acad. Sci. Paris S\'er. I Math. 326 (1998), no. 10, 1171--1174.

\bibitem{Ph} W. Philipp, {\it Limit theorems for sums of partial quotients of continued fractions}, Monats. Math., 105 (1988), 195--206.

\bibitem{Sp}
V.~Sprind\v{z}uk, Metric {Theory} of {Diophantine} {Approximations},
 V. H. Winston \& Sons, Washington, D.C., 1979.

\end{thebibliography}
\end{document}